\newtheorem{theorem}{Theorem}[section]
\newtheorem{proposition}[theorem]{Proposition}
\newtheorem{corollary}[theorem]{Corollary}
\newtheorem{lemma}[theorem]{Lemma}
\theoremstyle{definition}
\newtheorem{definition}[theorem]{Definition}
\newtheorem{example}[theorem]{Example}
\newtheorem{problem}{Problem}
\newtheorem{remark}[theorem]{Remark}
\begin{document}
\title{Tree-Shifts: The entropy of tree-shifts of finite type}
\keywords{Tree-shift of finite type, entropy, hidden entropy}
\subjclass{Primary 37B10}
\author{Jung-Chao Ban}
\address[Jung-Chao Ban]{Department of Applied Mathematics, National Dong Hwa University, Hualien 970003, Taiwan, ROC.}
\email{jcban@mail.ndhu.edu.tw}
\author{Chih-Hung Chang}
\address[Chih-Hung Chang]{Department of Applied Mathematics, National
University of Kaohsiung, Kaohsiung 81148, Taiwan, ROC.}
\email{chchang@nuk.edu.tw}
\date{March 5, 2017}
\thanks{This work is partially supported by the Ministry of Science and Technology,
ROC (Contract No MOST 105-2115-M-259 -006 -MY2 and 105-2115-M-390 -001 -MY2). The first author is partially supported by National Center for Theoretical Sciences.}

\begin{abstract}
This paper studies the entropy of tree-shifts of finite type with and
without boundary conditions. We demonstrate that computing the entropy of a
tree-shift of finite type is equivalent to solving a system of nonlinear
recurrence equations. Furthermore, the entropy of the binary Markov
tree-shifts over two symbols is either $0$ or $\ln 2$. Meanwhile, the
realization of a class of reals including multinacci numbers is elaborated,
which indicates that tree-shifts are capable of rich phenomena. By
considering the influence of three different types of boundary conditions,
say, the periodic, Dirichlet, and Neumann boundary conditions, the necessary
and sufficient conditions for the coincidence of entropy with and without
boundary conditions are addressed.
\end{abstract}

\maketitle
\baselineskip=1.2\baselineskip



\section{Introduction}


\subsection{History and motivation}

Shift space is a powerful tool to describe the stationary solutions in
physical systems and lattice dynamical systems. Namely, for translation
invariant lattice dynamical systems, stationary solutions could be generated
by posing some local rules in the lattice. One can study the topological
behavior of such systems by studying the shift space instead. The same idea
is also used in dynamical systems. Namely, a map which admits Markov
partition is semi-conjugate to a shift space. The investigation of the
topological properties of such a map is equivalent to exploring the
corresponding shift.


In the classical symbolic dynamical systems, shifts of finite type are an
important class for the purpose of illustration. A shift of finite type is
realized as a set of infinite paths in a finite directed graph. In the past
three decades, the study of the complexity for the given systems has
received considerable attention. Entropy plays an important role to measure
such a property. In information theory, entropy also measures the
\textquotedblleft information capacity\textquotedblright\ or
\textquotedblleft ability to transmit messages\textquotedblright . 
Readers are referred to \cite{ALM-2000, Downarowicz-2011, Kit-1998, LM-1995,
P-1997} and the references therein for more details.

While the dynamical behavior of shifts of finite type is explicitly
characterized, the properties of multidimensional symbolic dynamical systems
are barely known. For the classical symbolic dynamics, Williams indicates
that the conjugacy of one-sided shifts of finite type is decidable, and
topological entropy can be treated as an indicator for classifying shifts of
finite types (see \cite{LM-1995}). Nevertheless, the conjugacy of
multidimensional shifts of finite type is undecidable (see \cite%
{CJJ+-ETDS2003, JM-PAMS1999, LS-2002}). Furthermore, there is no algorithm
for the computation of topological entropy of multidimensional shifts of
finite type (see \cite{BL-DCDS2005, BPS-TAMS2010, HM-AoM2010, PS-JdM2015}
and the references therein).

In \cite{AB-TCS2012, AB-TCS2013}, the authors introduce the notion of shifts
defined on infinite trees, that are called tree-shifts. Infinite trees have
a natural structure of one-sided symbolic dynamical systems equipped with
multiple shift maps. The $i$th shift map applies to a tree which gives the
subtree rooted at the $i$th children of the tree. Sets of finite patterns of
tree-shifts of finite type are strictly testable tree languages. Such
testable tree languages are also called $k$-testable tree languages.
Probabilistic $k$-testable models are used for pattern classification and
stochastic learning \cite{VCC-PAMIIEEET2005}.

Tree-shifts are interesting for elucidation due to them constituting an
intermediate class in between one-sided shifts and multidimensional shifts.
Aubrun and B\'{e}al extend Williams' result to tree-shifts; more precisely,
they show that the conjugacy of irreducible tree-shifts of finite type is
decidable \cite{AB-TCS2012}. Furthermore, Aubrun and B\'{e}al accomplish
other celebrated results in tree-shifts, such as realizing tree-shifts of
finite type and sofic tree-shifts via tree automata, developing an algorithm
for determining whether a sofic tree-shift is a tree-shift of finite type,
and the existence of irreducible sofic tree-shifts that are not factors of
tree-shifts of finite type. Readers are referred to \cite{AB-TCS2012,
AB-TCS2013} for more details.

This paper, which is a consequential investigation of \cite{BC-2015},
focuses on the entropy of tree-shifts of finite type and intends to reveal
the connection between tree-shifts of finite type and nonlinear dynamical
systems. We demonstrate that the characterization of entropy of tree-shifts
of finite type relates to solving systems of nonlinear recurrence equations
(Theorems \ref{Thm : 1} and \ref{thm:1to1-correspondence-entropy-SNRE}).
Meanwhile, the entropy of binary Markov tree-shifts over the alphabet of two
symbols is elaborated completely (Theorem \ref{Thm: 3}) while a large set of
reals can be realized via Markov tree-shifts over an alphabet of more than
two symbols (Theorem \ref{Thm : 2}).

Notably, the investigation of entropy of shifts of finite type is mainly
relevant to the rules of allowed patterns and can be treated as a subclass
of recurrence equations. The computation is rather difficult for
multidimensional shifts. In other words, the effect of dimension of the
underlying spaces is only seen in the higher dimensional cases. The
difficulty also surrounds the elucidation of entropy of tree-shifts of
finite type. More explicitly, computing entropy of tree-shifts is equivalent
to solving systems of nonlinear recurrence equations, but it is known that
the computation of entropy of shifts of finite type is investigating linear
recurrence equations instead. Rather than the novelty of elaboration of
systems of nonlinear recurrence equations, there are many discussion about
nonlinear recurrence equations, readers are referred to \cite{GreathouseIV-2012} for more details. Remarkably,
a $d$-dimensional tiling system is a tree-shifts of finite type with abelian
nodes and each node has $d$ children. The related discussion is in
preparation.

Meanwhile, boundary conditions are commonly considered in lattice dynamical
systems and three types are typically considered. Namely, \emph{Dirichlet (}%
or\emph{\ fixed)}, \emph{Neumann (}or\emph{\ zero-flux)}, and \emph{periodic}
boundary conditions. (See \cite{BB-N2006, CMV-RCD1996, CR-2002, HR-1985,
Kee-SJAM1987, MC-ITCSIFTaA1995, Shi-SJAM2000, Thir-ITCSIFTA1993} for
examples.) Traditionally, a Dirichlet condition means that such a system is
forced to have a fixed input and output by energy in the boundary; a Neumann
condition means that there is no input in the boundary cell due to the fact
that any input would cause energy and/or material flow from the outside
making the system an \textquotedblleft open system\textquotedblright\ in the
sense of thermodynamics in physics; the periodic boundary condition is
equivalent to fabricating a chip on a silicon torus as its substrate.

For a system with a boundary constraint, it is interesting to see the effect
from various boundary conditions. Correspondingly, the following question
arose: 
\begin{equation}
h=h^{N}=h^{P}=h^{D}\text{?}  \label{13}
\end{equation}%
Herein, $h$ relates to the entropy, and $h^{N},h^{P}$, and $h^{D}$ relates
to the entropy constrained with Neumann, periodic, and Dirichlet boundary
conditions, respectively. Such a problem was posed in \cite{AH-2003} and the
references therein. Intrinsically, it indicates that the complexity of a
given system will not be influenced by the boundary if the equality holds.
Otherwise it will be influenced and $h-h^{i}$ measures the difference. In
circuit theory, it also means that we could control the \textquotedblleft
ideal system\textquotedblright\ (i.e. without a boundary condition) by
manipulating the input in the boundary (see \cite{CR-2002} for more
details). Theorem \ref{thm:Entropy-NBC} (resp. Theorem \ref{thm:Entropy-DBC}
and Theorem \ref{thm:Entropy-PBC}) provides a natural and intrinsic
characterization of question (\ref{13}) for $h=h^{N}$ (resp. $h=h^{D}$ and $%
h=h^{P}$).

The upcoming subsection elucidates the correspondence between Markov
tree-shifts and systems of nonlinear recurrence equations, followed by the
analysis of the entropy of binary Markov tree-shifts with two symbols.
Section 3 addresses the realization theorem of tree-shifts of finite type.
The boundary influence for the entropy is elucidated in Section 4.
Discussion and conclusions are given in Section 5.


\subsection{Preliminaries and notations}

This subsection recalls some basic definitions of symbolic dynamics on
infinite trees. The nodes of infinite trees considered in this paper have a
fixed number of children and are labeled in a finite alphabet. Hence the
class of classical one-sided shift spaces is a special case in the present
paper.

Let $\Sigma = \{0, 1, \ldots, d-1\}$ and let $\Sigma^* = \bigcup_{n \geq 0}
\Sigma^n$ be the set of words over $\Sigma$, where $\Sigma^n = \{w_1 w_2
\cdots w_n: w_i \in \Sigma \text{ for } 1 \leq i \leq n\}$ is the set of
words of length $n$ for $n \in \mathbb{N}$ and $\Sigma^0 = \{\epsilon\}$
consists of the empty word $\epsilon$. An \emph{infinite tree} $t$ over a
finite alphabet $\mathcal{A}$ is a function from $\Sigma^*$ to $\mathcal{A}$%
. A node of an infinite tree is a word of $\Sigma^*$. It is seen that the
empty word relates to the root of the tree. Suppose $x$ is a node of a tree. 
$x$ has children $xi$ with $i \in \Sigma$. A sequence of words $(x_k)_{1
\leq k \leq n}$ is called a \emph{path} if, for all $k \leq n-1$, $x_{k+1} =
x_k i_k$ for some $i_k \in \Sigma$.

Let $t$ be a tree and let $x$ be a node, we refer $t_x$ to $t(x)$ for
simplicity. A subset of words $L \subset \Sigma^*$ is called \emph{%
prefix-closed} if each prefix of $L$ belongs to $L$. A function $u$ defined
on a finite prefix-closed subset $L$ with codomain $\mathcal{A}$ is called a 
\emph{pattern} (or \emph{block}), and $L$ is called the \emph{support} of
the pattern. A subtree of a tree $t$ rooted at a node $x$ is the tree $%
t^{\prime }$ satisfying $t^{\prime }_y = t_{xy}$ for all $y \in \Sigma^*$
such that $xy$ is a node of $t$, where $xy = x_1 \cdots x_m y_1 \cdots y_n$
means the concatenation of $x = x_1 \cdots x_m$ and $y_1 \cdots y_n$.

Suppose $n$ is a nonnegative integer. Let $\Sigma_n = \bigcup_{k = 0}^n
\Sigma^k$ denote the set of words of length at most $n$. We say that a
pattern $u$ is \emph{a block of height $n$} (or \emph{an $n$-block}) if the
support of $u$ is $\Sigma_{n-1}$, denoted by $\mathrm{height}(u) = n$.
Furthermore, $u$ is a pattern of a tree $t$ if there exists $x \in \Sigma^*$
such that $u_y = t_{xy}$ for every node $y$ of $u$. In this case, we say
that $u$ is a pattern of $t$ rooted at the node $x$. A tree $t$ is said to 
\emph{avoid} $u$ if $u$ is not a pattern of $t$. If $u$ is a pattern of $t$,
then $u$ is called an \emph{allowed pattern} of $t$.

We denote by $\mathcal{T}$ (or $\mathcal{A}^{\Sigma^*}$) the set of all
infinite trees on $\mathcal{A}$. For $i \in \Sigma$, the shift
transformations $\sigma_i$ from $\mathcal{T}$ to itself are defined as
follows. For every tree $t \in \mathcal{T}$, $\sigma_i(t)$ is the tree
rooted at the $i$th child of $t$, that is, $\sigma_i(t)_x = t_{ix}$ for all $%
x \in \Sigma^*$. For the simplification of the notation, we omit the
parentheses and denote $\sigma_i(t)$ by $\sigma_i t$. The set $\mathcal{T}$
equipped with the shift transformations $\sigma_i$ is called the \emph{full
tree-shift} of infinite trees over $\mathcal{A}$. Suppose $w = w_1 \cdots
w_n \in \Sigma^*$. Define $\sigma_w = \sigma_{w_n} \circ \sigma_{w_{n-1}}
\circ \cdots \circ \sigma_{w_1}$. It follows immediately that $(\sigma_w
t)_x = t_{wx}$ for all $x \in \Sigma^*$.

Given a collection of patterns $\mathcal{F}$, let $\mathsf{X}_{\mathcal{F}}$
denote the set of all trees avoiding any element of $\mathcal{F}$. A subset $%
X \subseteq \mathcal{T}$ is called a \emph{tree-shift} if $X = \mathsf{X}_{%
\mathcal{F}}$ for some $\mathcal{F}$. We say that $\mathcal{F}$ is \emph{a
set of forbidden patterns} (or \emph{a forbidden set}) of $X$. It can be
seen that a tree-shift satisfies $\sigma_w X \subseteq X$ for all $w \in
\Sigma^*$. A tree-shift $X = \mathsf{X}_{\mathcal{F}}$ is called a \emph{%
tree-shift of finite type} (TSFT) if the forbidden set $\mathcal{F}$ is
finite.

Denote the set of all blocks of height $n$ of $X$ by $B_n(X)$, and denote
the set of all blocks of $X$ by $B(X)$. Suppose $u \in B_n(X)$ for some $n
\geq 2$. Let $\sigma_i u$ be the block of height $n-1$ such that $(\sigma_i
u)_x = u_{ix}$ for $x \in \Sigma_{n-2}$. The block $u$ is written as $u =
(u_{\epsilon}, \sigma_0 u, \sigma_1 u, \ldots, \sigma_{d-1} u)$.

Let $\mathcal{B}$ be a collection of $2$-blocks, and let $X^{\mathcal{B}} = 
\mathsf{X}_{\mathcal{F}}$, where $\mathcal{F} = B_2(\mathcal{T}) \setminus 
\mathcal{B}$. It follows immediately that $X^{\mathcal{B}}$ is a TSFT. We
say that $\mathcal{B}$ is \emph{a basic set of allowed patterns} (or \emph{a
basic set}) of $X^{\mathcal{B}}$ and $X^{\mathcal{B}}$ is generated by $%
\mathcal{B}$. It can be seen that a tree-shift satisfies $\sigma_w X^{%
\mathcal{B}} \subseteq X^{\mathcal{B}}$ for all $w \in \Sigma^*$. In
particular, for the case where $d = 2$, a $2$-block $u \in \mathcal{B}$ is
written as $(i, j, k)$.

We remark that a basic set of allowed patterns is, in general, a subset of $%
B_n(\mathcal{T})$ for some $n \in \mathbb{N}$. For the clarification of the
investigation, we consider $\mathcal{B} \subseteq B_2(\mathcal{T})$ for the
rest of this paper unless otherwise stated. A TSFT is called a \emph{Markov
tree-shift} in this case. One of the most frequently used quantum which
describes the complexity of a dynamical system is \emph{entropy}. For a
tree-shift $X^{\mathcal{B}}$, the definition of entropy is given as follows.

\begin{definition}
Suppose a basic set $\mathcal{B}$ is given.

\begin{enumerate}
\item The \emph{entropy} of $X^{\mathcal{B}}$, denoted by $h(X^{\mathcal{B}%
})=h(\mathcal{B})$, is defined as 
\begin{equation}
h(\mathcal{B})=\lim_{n\rightarrow \infty }\frac{\ln ^{2}|B_{n}(X^{\mathcal{B}%
})|}{n},  \label{1}
\end{equation}%
whenever the limit exists, where $|B_{n}(X^{\mathcal{B}})|$ means the
cardinality of $B_{n}(X^{\mathcal{B}})$, and $\ln ^{2}=\ln \circ \ln $.

\item If $|B_{n}(X^{\mathcal{B}})|$ behaves like $\exp (\alpha \kappa ^{n})$%
, such as $c_{1}\exp (\alpha \kappa ^{n})\leq |B_{n}(X^{\mathcal{B}})|\leq
c_{2}\exp (\alpha \kappa ^{n})$ for some $c_{1},c_{2}>0$ and $\alpha ,\kappa
\geq 0$ are constants. From (\ref{1}) we have $h(\mathcal{B})=\ln \kappa $,
and called $\alpha $ the \emph{hidden entropy} (or \emph{sub-entropy}) of $%
X^{\mathcal{B}}$.
\end{enumerate}
\end{definition}

It is seen that the topological entropy of a full tree-shift $\mathcal{T} = \mathcal{A}^{\Sigma^*}$ is $h(\mathcal{T}) = \ln d$, where $d = |\Sigma| \in \mathbb{N}$ with $d \geq 2$. Indeed, the cardinality of $n$-blocks in $\mathcal{T}$ is $|B_n(\mathcal{T})| = k^{\frac{d^n-1}{d-1}}$, where $k = |\mathcal{A}|$. It follows that
$$
h(\mathcal{T}) = \lim_{n \to \infty} \dfrac{\ln^2 |B_n(\mathcal{T})|}{n} = \lim_{n \to \infty} \dfrac{\ln^2 k + \ln (d^n - 1) - \ln (d-1)}{n} = \ln d.
$$

The following lemma addresses the sufficient condition for the existence of the limit \eqref{1}. The proof is straightforward, thus it is omitted.

\begin{lemma}\label{thm:existence-entropy-limit}
The limit \eqref{1} exists if $\displaystyle\lim_{n\rightarrow \infty }\frac{\ln |B_{n}(X^{\mathcal{B}})|}{d^{n}}>0$.
\end{lemma}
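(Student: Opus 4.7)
The plan is to exploit the double-logarithm structure in \eqref{1} and show that the hypothesis forces $|B_n(X^{\mathcal{B}})|$ to grow doubly-exponentially at the rate $d^n$, which is exactly the regime in which the limit in \eqref{1} is finite and equal to $\ln d$.

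Write $a_n := |B_n(X^{\mathcal{B}})|$ and set $L := \lim_{n\to\infty} \frac{\ln a_n}{d^n}$, which by hypothesis is a strictly positive real number. I will first rewrite this as $\ln a_n = d^n(L + \varepsilon_n)$ where $\varepsilon_n \to 0$ as $n\to\infty$. Since $L > 0$, for all sufficiently large $n$ we have $L + \varepsilon_n > 0$, so it is legitimate to take a second logarithm and obtain
\begin{equation*}
\ln \ln a_n \;=\; n\ln d + \ln(L + \varepsilon_n).
\end{equation*}
The continuity of $\ln$ at the positive value $L$ guarantees $\ln(L+\varepsilon_n) \to \ln L$, and in particular this term is bounded.

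Dividing the identity above by $n$ gives
\begin{equation*}
\frac{\ln^2 a_n}{n} \;=\; \ln d + \frac{\ln(L + \varepsilon_n)}{n},
\end{equation*}
and the right-hand side converges to $\ln d$ because the numerator of the second term is bounded while the denominator tends to infinity. Hence the limit in \eqref{1} exists (with value $\ln d$), proving the lemma.

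The only place where any subtlety arises is the step where one takes the second logarithm: this is valid only once $L + \varepsilon_n > 0$, which is ensured by the hypothesis $L > 0$ (a degenerate hypothesis $L = 0$ would allow $\ln a_n$ to oscillate on a scale smaller than $d^n$, destroying the argument). This is the main — and indeed the only — obstacle, and it is handled directly by the assumption.
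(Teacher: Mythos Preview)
Your argument is correct and is exactly the ``straightforward'' computation the paper alludes to (the paper omits the proof entirely). Writing $\ln a_n = d^n(L+\varepsilon_n)$ with $L>0$ and $\varepsilon_n\to 0$, then taking a second logarithm and dividing by $n$, is the natural route and yields the limit $\ln d$; nothing further is needed.
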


%
%
%

Aside from the existence of the limit \eqref{1}, it is natural to ask the
following question.

\begin{problem}
\label{Prob: 1}How can the hidden entropy be computed?
\end{problem}

This work focuses on the investigation of \eqref{1}. The study of Problem %
\ref{Prob: 1} is in preparation. It is seen below that, for those
\textquotedblleft good enough\textquotedblright\ basic sets $\mathcal{B}$,
the entropy $h(\mathcal{B})$ can be computed explicitly.

Suppose $X^{\mathcal{B}}$ is a one-sided shift space, i.e., consider the
case where $d = 1$. The entropy of $X^{\mathcal{B}}$ is defined as 
\begin{equation*}
h(\mathcal{B}) = \lim_{n\rightarrow \infty }\frac{\ln |B_n(X^{\mathcal{B}})|%
}{n}.
\end{equation*}
Notably, in this case, $B_n(X^{\mathcal{B}})$ is a subset of $\mathcal{A}^n$%
. This makes the growth rate of $|B_n(X^{\mathcal{B}})|$ with respect to $n$
up to exponential, and the existence of the limit comes from the
subadditivity of $\{\ln |B_n(X^{\mathcal{B}})|\}_{n \geq 1}$ (see \cite%
{LM-1995}). For the case where $X^{\mathcal{B}}$ is a strict tree-shift,
i.e., $d \geq 2$, the growth rate of $|B_n(X^{\mathcal{B}})|$ with respect
to $n$ becomes doubly exponential since the size of the support of an $n$%
-block increases doubly exponentially.

Another difference between the topological entropy of one-sided shift spaces
and tree-shifts is the introduction of hidden entropy. It is remarkable
that, for a strict tree-shift rather than a one-sided shift space, the
hidden entropy means the ``average number of symbols'' used while the
topological entropy infers the ``average spatial dimension'' of the
underlying lattice. An intuitive observation can be seen from the full
tree-shift. Suppose $\mathcal{T}$ is the full tree-shift over $|\mathcal{A}|
= k$ and each node has exactly $d$ children. Then the entropy and hidden
entropy of $\mathcal{T}$ is $\ln d$ and $\ln k/(d-1)$, respectively.

Let $\mathcal{A}=\{x^{(1)}, x^{(2)}, \ldots ,x^{(k)}\}$ be the (ordered) symbol set and $X_{x^{(i)}}^{\mathcal{B}}$ denote the collection of trees $t$ satisfying$t_{\epsilon }=x^{(i)}$, where $1\leq i\leq k$. Given $n \in \mathbb{N}$ with $n \geq 2$, let $\mathcal{A}^n$ be an ordered set with respect to the lexicographic order. More explicitly, for any two $n$-words $\mathbf{x} = x_1 x_2 \cdots x_n$ and $\mathbf{y} = y_1 y_2 \cdots y_n$, we say that $\mathbf{x}<\mathbf{y}$, if and only if there
exists $1 < r \leq n$ such that $x_{i}=y_{i}$ for $i\leq r-1$ and $%
x_{r}<y_{r}$, and $\mathbf{x}=\mathbf{y}$ otherwise.

\begin{definition}
\label{Def: order and indicator vector} Let $\mathcal{A}=\{x^{(1)},\ldots
,x^{(k)}\}$ be the symbol set and suppose $\mathcal{A}^{d}$ is an ordered
set with respect to the lexicographic order, $d \in \mathbb{N}$.
\begin{enumerate}
\item Let $F(x^{(1)},\ldots ,x^{(k)})=\sum\limits_{\mathbf{x}\in \mathcal{A}^{d}} f_{\mathbf{x}}\mathbf{x}$ be a binary combination over $\mathcal{A}^{d}$, i.e., $f_{\mathbf{x}} \in \{0, 1\}$ for $\mathbf{x} \in \mathcal{A}^{d}$. The vector $v_{F}=(f_{\mathbf{x}})_{\mathbf{x}\in \mathcal{A}^{d}} \in \mathbb{R}^{k^d}$ is called the \emph{indicator vector} of $F$.
\item Suppose $v_F$ and $v_G$ are the indicator vectors of $F=\sum\limits_{\mathbf{x}\in \mathcal{A}^{d}} f_{\mathbf{x}}\mathbf{x}$ and $G=\sum\limits_{\mathbf{x}\in \mathcal{A}^{d}} g_{\mathbf{x}}\mathbf{x}$, respectively. We say that $v_F$ \emph{dominates} $v_G$, denoted by $v_F \geq v_G$, if $f_{\mathbf{x}} \geq g_{\mathbf{x}}$ for all $\mathbf{x} \in \mathcal{A}^{d}$. In other words, $v_F - v_G$ is a nonnegative vector.
\end{enumerate}
\end{definition}

We remark that, since $\mathcal{A}^{d}$ is an ordered set with respect to the lexicographic order, each binary combination $F(x^{(1)},\ldots ,x^{(k)})=\sum\limits_{\mathbf{x}\in \mathcal{A}^{d}} f_{\mathbf{x}}\mathbf{x}$ is also an ordered combination. For instance, $v_{(x^{(1)})^{d}}=(1,0, \ldots ,0)$, and $v_{F}=e_{k^{d}}$ if $F=\sum\limits_{\mathbf{x}\in \mathcal{A}^{d}} \mathbf{x}$, where $e_{k^{d}}$ is the vector whose entries are all $1$'s.

\begin{definition}
A sequence $\{a_{n}^{(1)},\ldots ,a_{n}^{(k)}\}_{n\in \mathbb{N}}$ is defined by a \emph{%
system of nonlinear recurrence equations} (SNRE) of degree $(d,k)$ if 
\begin{equation*}
a_{n}^{(i)}=F^{(i)}(a_{n-1}^{(1)},a_{n-1}^{(2)},\ldots ,a_{n-1}^{(k)})\quad 
\text{for}\quad n\geq 2,1\leq i\leq k,
\end{equation*}%
and $a_{1}^{(i)}=a^{(i)}$ for $1\leq i\leq k$, where $F^{(1)}, \ldots, F^{(k)}$ are binary combinations over $\{a_{n-1}^{(1)},a_{n-1}^{(2)},\ldots ,a_{n-1}^{(k)}\}^d$, respectively.
\end{definition}

For the case where $k=2$, set $\mathcal{A}=\{x^{(1)},x^{(2)}\}=\{1,2\}$, $%
a_{n}(\mathcal{B})=|B_{n}(X_{1}^{\mathcal{B}})|$, $b_{n}(\mathcal{B}%
)=|B_{n}(X_{2}^{\mathcal{B}})|$ and $c_{n}(\mathcal{B})=|B_{n}(X^{\mathcal{B}%
})|$. We shall usually shorten $a_{n}(\mathcal{B}),b_{n}(\mathcal{B})$, and $%
c_{n}(\mathcal{B})$ to $a_{n},b_{n}$, and $c_{n}$, respectively, when $%
\mathcal{B}$ is understood. For each $n\in \mathbb{N}$, let $\Gamma
_{n}:\{1,2\}\rightarrow \{a_{n},b_{n}\}$ be defined as $\Gamma _{n}(1)=a_{n}$
and $\Gamma _{n}(2)=b_{n}$. We have the following theorem.

\begin{theorem}[SNRE for $d=k=2$]
\label{Thm : 1} Given a basic set $\mathcal{B}$; then $c_{n}=a_{n}+b_{n}$
and $a_{n}$ and $b_{n}$ satisfy the following SNRE of degree $(2,2)$: 
\begin{equation*}
\left\{ 
\begin{array}{l}
a_{n}=\sum\limits_{(1,j,k)\in \mathcal{B}}\Gamma _{n-1}(j)\Gamma _{n-1}(k),
\\ 
b_{n}=\sum\limits_{(2,j,k)\in \mathcal{B}}\Gamma _{n-1}(j)\Gamma _{n-1}(k),
\\ 
a_{2}=|B_{2}(X_{1}^{\mathcal{B}})|,b_{2}=|B_{2}(X_{2}^{\mathcal{B}})|.%
\end{array}%
\right.
\end{equation*}
\end{theorem}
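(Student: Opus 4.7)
The plan is to obtain the recurrence by a direct structural decomposition of $n$-blocks. First I would note that every $u \in B_n(X^{\mathcal{B}})$ has $u_\epsilon \in \{1,2\}$, and the events $u_\epsilon = 1$ and $u_\epsilon = 2$ partition $B_n(X^{\mathcal{B}})$ into $B_n(X_1^{\mathcal{B}})$ and $B_n(X_2^{\mathcal{B}})$; hence $c_n = a_n + b_n$ is immediate.

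For the SNRE, I would use the decomposition $u = (u_\epsilon, \sigma_0 u, \sigma_1 u)$ introduced in the preliminaries, and prove the following gluing characterization: a triple $(i,v,w)$ with $v,w \in B_{n-1}(\mathcal{T})$ lies in $B_n(X_i^{\mathcal{B}})$ if and only if $v \in B_{n-1}(X_{v_\epsilon}^{\mathcal{B}})$, $w \in B_{n-1}(X_{w_\epsilon}^{\mathcal{B}})$, and $(i, v_\epsilon, w_\epsilon) \in \mathcal{B}$. The forward direction is essentially by definition, since every $2$-block embedded in $u$ must lie in $\mathcal{B}$. The backward direction is where the Markov hypothesis $\mathcal{B} \subseteq B_2(\mathcal{T})$ is used: every $2$-block that appears in $u$ is either the root $2$-block $(i, v_\epsilon, w_\epsilon)$ or a $2$-block strictly inside $v$ or $w$, so no forbidden pattern can arise from the gluing beyond the root $2$-block, which is allowed by assumption.

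Once this bijection is in place the count is purely combinatorial: for each $i \in \{1,2\}$, summing over admissible root $2$-blocks $(i,j,k) \in \mathcal{B}$ and using the independence of the two subtree choices yields
\begin{equation*}
|B_n(X_i^{\mathcal{B}})| = \sum_{(i,j,k) \in \mathcal{B}} |B_{n-1}(X_j^{\mathcal{B}})| \cdot |B_{n-1}(X_k^{\mathcal{B}})| = \sum_{(i,j,k) \in \mathcal{B}} \Gamma_{n-1}(j)\,\Gamma_{n-1}(k),
\end{equation*}
which is exactly the asserted recurrence for $a_n$ (take $i=1$) and $b_n$ (take $i=2$). The initial conditions $a_2 = |B_2(X_1^{\mathcal{B}})|$ and $b_2 = |B_2(X_2^{\mathcal{B}})|$ are tautological from the definitions of $a_n$ and $b_n$.

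The only genuine content, and hence the main place where care is needed, is the backward direction of the gluing characterization above. It is not a deep obstacle, but it is the non-trivial step: it ensures that the two subtree choices are truly \emph{independent} once the root $2$-block is fixed, and this independence is precisely what makes the counting multiplicative and allows the recurrence to close as an SNRE in $(a_{n-1}, b_{n-1})$ alone, with no residual coupling between the two branches. If one were to allow forbidden patterns of height greater than $2$, this independence would fail and the same decomposition would produce coupled cross-terms rather than a clean SNRE.
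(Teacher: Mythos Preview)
Your proposal is correct and follows the same approach as the paper's proof: decompose each $n$-block by its root $2$-block and count the two subtrees independently. Your version is in fact more careful than the paper's, which simply asserts that the number of $n$-blocks with root $2$-block $(i,j,k)\in\mathcal{B}$ is $\Gamma_{n-1}(j)\Gamma_{n-1}(k)$ without spelling out the gluing bijection or the role of the Markov hypothesis; your explicit treatment of the backward direction and the independence of the two branches makes the argument complete.
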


\begin{proof}
It can be easily checked that $c_{n}=a_{n}+b_{n}$ and, if $u=(i,j,k)\in 
\mathcal{B}$, then the numbers of $n$-blocks that has $u$ rooted at $%
\epsilon $ is $\Gamma _{n-1}(j)\Gamma _{n-1}(k)$. Therefore, we have 
\begin{equation*}
a_{n}=\sum\limits_{(1,j,k)\in \mathcal{B}}\Gamma _{n-1}(j)\Gamma
_{n-1}(k),b_{n}=\sum\limits_{(2,j,k)\in \mathcal{B}}\Gamma _{n-1}(j)\Gamma
_{n-1}(k),\quad n\geq 2,
\end{equation*}%
with the initial condition $a_{2}=|B_{2}(X_{1}^{\mathcal{B}})|$ and $%
b_{2}=|B_{2}(X_{2}^{\mathcal{B}})|$. This completes the proof.
\end{proof}

Theorem \ref{Thm : 1} indicates that each basic set of $2$-blocks associates
with an SNRE of degree $(2, 2)$. In general, Theorem \ref%
{thm:1to1-correspondence-entropy-SNRE}, which is an extension of Theorem \ref%
{Thm : 1}, reveals that the calculation of the topological entropy of a
tree-shift of finite type is equivalent to solving a system of nonlinear
recurrence equations.

\begin{theorem}
\label{thm:1to1-correspondence-entropy-SNRE} The topological entropy of a
tree-shift of finite type is realized as a system of nonlinear recurrence
equations of degree $(d, k)$ for some $d, k$. Conversely, every system of
nonlinear recurrence equations of degree $(d, k)$ corresponds to the
topological entropy of some tree-shifts of finite type.
\end{theorem}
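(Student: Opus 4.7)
The plan is to prove the two directions separately, in both cases extending the counting argument already used in the proof of Theorem \ref{Thm : 1}. For the direction sending a TSFT to an SNRE, I would first treat the Markov case: let $X^{\mathcal{B}}$ be a Markov tree-shift over $\mathcal{A} = \{x^{(1)}, \ldots, x^{(k)}\}$ in which every node has $d$ children, and set $a_n^{(i)} = |B_n(X_{x^{(i)}}^{\mathcal{B}})|$. Define $\Gamma_{n-1}:\mathcal{A} \to \mathbb{N}$ by $\Gamma_{n-1}(x^{(j)}) = a_{n-1}^{(j)}$, the natural $k$-symbol analogue of the $\Gamma_n$ from Theorem \ref{Thm : 1}. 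Since an $n$-block of $X^{\mathcal{B}}$ rooted at $x^{(i)}$ is determined by the choice of an allowed $2$-block $(x^{(i)}, y_1, \ldots, y_d) \in \mathcal{B}$ at the root, together with an independent choice of an $(n-1)$-block rooted at $y_j$ for each $1 \le j \le d$, one obtains
$$
a_n^{(i)} \;=\; \sum_{(x^{(i)}, y_1, \ldots, y_d) \in \mathcal{B}} \prod_{j=1}^{d} \Gamma_{n-1}(y_j), \qquad 1 \le i \le k.
$$
This is an SNRE of degree $(d,k)$ whose binary combinations $F^{(i)}$ have $f^{(i)}_{(y_1,\ldots,y_d)} = 1$ precisely when $(x^{(i)}, y_1, \ldots, y_d) \in \mathcal{B}$, and $0$ otherwise. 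A general (non-Markov) TSFT whose forbidden patterns have height at most $N$ is then reduced to the Markov case by passing to its $(N{-}1)$-th higher block presentation, a Markov tree-shift conjugate to $X$ over the enlarged alphabet $B_{N-1}(X)$; entropy is a conjugacy invariant, so the SNRE for the recoded shift still computes $h(X)$.

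For the converse, I would begin with an arbitrary SNRE of degree $(d,k)$ given by binary combinations $F^{(i)}(x^{(1)},\ldots,x^{(k)}) = \sum_{\mathbf{y} \in \mathcal{A}^d} f^{(i)}_{\mathbf{y}} \mathbf{y}$ with $f^{(i)}_{\mathbf{y}} \in \{0,1\}$. The natural realization is the Markov tree-shift $X^{\mathcal{B}}$ over $\mathcal{A} = \{x^{(1)}, \ldots, x^{(k)}\}$ with $d$-ary branching and
$$
\mathcal{B} \;=\; \bigl\{(x^{(i)}, y_1, \ldots, y_d) \;:\; f^{(i)}_{(y_1, \ldots, y_d)} = 1, \; 1 \le i \le k\bigr\}.
$$
The counting formula from the first direction then shows immediately that $a_n^{(i)} := |B_n(X_{x^{(i)}}^{\mathcal{B}})|$ satisfies the prescribed recurrence. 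The initial values $a_2^{(i)}$ produced by the explicit $2$-block count may differ from the given $a^{(i)}$, but this discrepancy is irrelevant for entropy: $h(\mathcal{B})$ depends only on the asymptotic doubly-exponential growth rate of $|B_n(X^{\mathcal{B}})| = \sum_i a_n^{(i)}$, which is governed by the recurrence rather than by the choice of positive initial data.

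The main obstacle I anticipate lies in making the higher block recoding fully rigorous in the tree setting. One must verify that the $(N{-}1)$-block code commutes with each shift transformation $\sigma_0, \ldots, \sigma_{d-1}$, that the compatibility of $(N{-}1)$-blocks rooted at a parent and at its $d$ children can be encoded as $2$-block constraints in the enlarged alphabet, and that the original height-$N$ forbidden set translates into only finitely many new forbidden $2$-blocks. The classical one-sided argument adapts, but the branching structure requires that compatibility be propagated down through $d$ children simultaneously. Once this step is in place, what remains is the combinatorial identity above, together with an invocation of Lemma \ref{thm:existence-entropy-limit} to guarantee existence of the defining limit when the SNRE produces a genuine doubly-exponential growth rate.
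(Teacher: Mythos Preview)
Your proposal is correct and follows essentially the same approach as the paper: the same counting identity $a_n^{(i)} = \sum_{(x^{(i)},y_1,\ldots,y_d)\in\mathcal{B}} \prod_j \Gamma_{n-1}(y_j)$ for the forward direction, and the same construction of $\mathcal{B}$ from the nonzero coefficients of the $F^{(i)}$ for the converse. The paper handles the non-Markov case simply by writing ``without loss of generality we may assume $X$ is a Markov tree-shift'' without further justification, whereas you spell out that this reduction requires the higher-block presentation and flag the points that need checking; your discussion of the initial-value discrepancy in the converse is likewise more explicit than the paper's (which absorbs it into an index shift $|B_n(X_i)| = a_{n-1}^{(i)}$).
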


\begin{proof}
Suppose $X$ is a tree-shift of finite type. Without loss of generality, we
may assume that $X = \mathsf{X}_{\mathcal{F}}$ is a Markov tree-shift over $%
\mathcal{A} = \{1, 2, \ldots, k\}$ with exactly $d$ children for each node
of $t \in X$. Recall that $X$ is called a Markov tree-shift if $\mathcal{F}$
consists of $2$-blocks.

For $1 \leq i \leq k$, let $a^{(i)}_n = |B_n(X_i)|$
be the cardinal number of $n$-blocks of $X_i$ for $n \in \mathbb{N}$. Then $%
|B_n(X)| = \Sigma_{i=1}^k a^{(i)}_n$. Write $u \in B_2(X)$ as $u =
(u_{\epsilon}, u_0, \ldots, u_{d-1})$. It is easily seen that 
\begin{equation*}
a^{(i)}_n = \sum_{u \in B_2(X_i)} a^{(u_0)}_{n-1} a^{(u_1)}_{n-1} \cdots
a^{(u_{d-1})}_{n-1}, \quad \text{for} \quad 1 \leq i \leq k, n \geq 3.
\end{equation*}
Define 
\begin{equation*}
F^{(i)}(\alpha_1, \alpha_2, \ldots, \alpha_k) = \sum_{u \in B_2(X_i)}
\alpha^{(u_0)}_{n-1} \alpha^{(u_1)}_{n-1} \cdots \alpha^{(u_{d-1})}_{n-1}.
\end{equation*}
It follows that the sequence $\{a^{(1)}_n, a^{(2)}_n, \ldots, a^{(k)}_n\}_{n
\geq 2}$ satisfies the SNRE 
\begin{equation*}
\left\{ \begin{aligned} a^{(i)}_n &= F^{(i)}(a^{(1)}_{n-1}, a^{(2)}_{n-1},
\ldots, a^{(k)}_{n-1}), \quad n \geq 3; \\ a^{(i)}_2 &= |B_2(X_i)|, \quad 1
\leq i \leq k. \end{aligned} \right.
\end{equation*}
Hence, the topological entropy of $X$ is realized by the above SNRE.

Conversely, suppose $\{a^{(1)}_n, a^{(2)}_n, \ldots, a^{(k)}_n\}_{n \in 
\mathbb{N}}$ satisfies the following SNRE of degree $(d, k)$: 
\begin{equation*}
\left\{ \begin{aligned} a^{(i)}_n &= f_i(a^{(1)}_{n-1}, a^{(2)}_{n-1},
\ldots, a^{(k)}_{n-1}), \quad n \geq 2; \\ a^{(i)}_1 &= a_i, \quad 1 \leq i
\leq k. \end{aligned} \right.
\end{equation*}
Set 
\begin{equation*}
\mathcal{B}_i = \{(i, p_1, \ldots, p_d): a_{\mathbf{x}} \neq 0, \text{ where 
} f_i = \Sigma a_{\mathbf{x}} \mathbf{x}, \mathbf{x} = \alpha_{p_1} \ldots
\alpha_{p_d} \in \Lambda^d\},
\end{equation*}
where $\Lambda = \{\alpha_1, \ldots, \alpha_k\}$. Let $\mathcal{B} =
\bigcup_{i=1}^k \mathcal{B}_i$ and let $\mathcal{F} = \mathcal{A}^2
\setminus \mathcal{B}$, herein $\mathcal{A} = \{1, \ldots, k\}$.

Similar to the discussion above, it can be verified without difficulty that $%
X = \mathsf{X}_{\mathcal{F}}$ is a Markov tree-shift and $B_n(X_i) =
a^{(i)}_{n-1}$ for $n \geq 3$ and $1 \leq i \leq k$.

This completes the proof.
\end{proof}


\section{Entropy for $d=k=2$}

Let $d=k=2$ and let $\mathcal{B}$ be a basic set which associated with an
SNRE (see Theorem \ref{Thm : 1}). In this case, we have a pair of indicator
vectors $(v_{F},v_{G})$ (see Definition \ref{Def: order and indicator vector}%
), where 
\begin{align*}
F(a_{n-1},b_{n-1})& =\sum\limits_{(1,j,k)\in \mathcal{B}}\Gamma
_{n-1}(j)\Gamma _{n-1}(k), \\
G(a_{n-1},b_{n-1})& =\sum\limits_{(2,j,k)\in \mathcal{B}}\Gamma
_{n-1}(j)\Gamma _{n-1}(k),
\end{align*}%
and $\Gamma _{n}(1)=a_{n},\Gamma _{n}(2)=b_{n}$, respectively. An SNRE is of
the \emph{dominant-type} if either $v_{F}$ dominates $v_{G}$ or $v_{G}$
dominates $v_{F}$. Finally we define $n_{\ast }:=|\{i:(v_{\ast })_{i}\neq
0\}|$, where $\ast $ stands for $F$ or $G$ and $\left( v\right) _{i}$ is the 
$i$-coordinate of $v$. Such number counts the number of nonzero entries in $%
v_{F}$ and $v_{G}$, respectively.


\subsection{Classical results}

In \cite{AS-FQ1973}, Aho and Sloane show that a sequence of natural numbers $%
\{x_i\}_{i \geq 1}$ satisfying a nonlinear recurrence of the form $x_{n+1} =
x_n^2 + g_n$, with $|g_n|< \frac{1}{4} x_n$ for $n \geq n_0$ has doubly
exponential form of $x_n \approx k^{2^n}$ for some $k$. The constant $k$ is
unknown in general. Ionascu and Stanica \cite{IS-AMUC2004} extend Aho and
Sloane's result and formulate $x_n$ explicitly for some $g_n$. Many
researchers have devoted to the study of nonlinear recurrence equations and
only a few results are obtained (cf.~%
\cite{GreathouseIV-2012}). Theorem \ref%
{thm:1to1-correspondence-entropy-SNRE} infers the corresponding between SNRE
and tree-shifts of finite type, which reveals the difficulty of the
computation of entropy.

\begin{lemma}
\label{Prop : 1} Suppose, for $n \geq 1$, $x_n$ satisfies a nonlinear
recurrence equation 
\begin{equation*}
\left\{ 
\begin{array}{l}
x_{n+1}=x_{n}^{2}+\left\vert g_{n}\right\vert , \\ 
\left\vert g_{n}\right\vert \leq x_{n}, \\ 
x_1 = x^1.%
\end{array}%
\right.
\end{equation*}%
Then 
\begin{equation*}
\lim_{n\rightarrow \infty }\frac{\ln ^{2}x_{n}}{n}=\ln 2.
\end{equation*}
\end{lemma}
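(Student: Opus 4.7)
The plan is to exploit the near-square nature of the recurrence by taking logarithms and normalizing by $2^n$, which is the standard device for sequences growing like $k^{2^n}$. First I would derive the two-sided estimate $x_n^2 \le x_{n+1} \le x_n^2 + x_n$ directly from $x_{n+1} = x_n^2 + |g_n|$ and $|g_n| \le x_n$. Observing that $x_n \ge x_1 > 1$ (assuming $x^1 > 1$, which is the nontrivial case), I would rewrite the upper bound as $x_{n+1} \le x_n^2 (1 + 1/x_n)$ and then take logarithms to obtain
\begin{equation*}
2 \ln x_n \le \ln x_{n+1} \le 2 \ln x_n + \ln\bigl(1 + 1/x_n\bigr).
\end{equation*}

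Next I would set $y_n := \ln x_n$ and $z_n := y_n / 2^n$. The inequality above reads $z_n \le z_{n+1} \le z_n + 2^{-(n+1)} \ln(1 + 1/x_n)$. The right-hand increment is summable because $\ln(1 + 1/x_n) \le \ln 2$ (in fact it decays to $0$ as $x_n \to \infty$), so $\{z_n\}$ is monotone increasing and bounded above. Hence $z_n \to L$ for some finite $L$, and the lower inequality $z_{n+1} \ge z_n$ iterated from $n = 1$ gives $L \ge z_1 = y_1/2 > 0$. This yields the asymptotic $\ln x_n = L \cdot 2^n + o(2^n)$, which in fact sharpens to $\ln x_n = L \cdot 2^n + O(1)$.

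Finally I would take one more logarithm: $\ln^{2} x_n = \ln y_n = \ln\bigl(L \cdot 2^n (1 + o(1))\bigr) = n \ln 2 + \ln L + o(1)$. Dividing by $n$ and letting $n \to \infty$ produces the claimed limit $\ln 2$.

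I do not expect any genuine obstacle: the argument is essentially the Aho–Sloane trick that was invoked in the paragraph preceding the lemma. The only point requiring a moment of care is confirming that $z_n$ converges to a \emph{strictly positive} constant (so that the outer $\ln$ in $\ln^2 x_n$ behaves like $\ln 2^n$ rather than being overwhelmed by a vanishing prefactor); this follows from monotonicity plus the initial condition $x_1 > 1$, and it is the reason for the hypothesis that the recurrence is genuinely quadratic.
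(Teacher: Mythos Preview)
Your argument is correct and is precisely the Aho--Sloane technique that the paper invokes in the paragraph immediately preceding the lemma. The paper itself does not supply a proof of Lemma~\ref{Prop : 1}; it states the lemma as a classical fact and refers the reader to \cite{AS-FQ1973} and \cite{IS-AMUC2004}, so there is nothing to compare beyond noting that your write-up is exactly the standard computation the authors have in mind. Your remark that one needs $x^1>1$ for the limit $L=\lim z_n$ to be strictly positive is a genuine (and necessary) caveat that the paper's statement leaves implicit.
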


%


\subsection{Complete characterization of entropy for $d=k=2$}

This subsection elaborates the entropy of tree-shifts of finite type for the
case where $d = k = 2$. Further investigation is discussed in the upcoming
section.

\begin{lemma}
\label{Lma : 2} If an SNRE corresponding to a given $\mathcal{B}$ is of the
dominant-type and satisfies one of the following conditions:
\begin{enumerate}
\item $(v_{F})_{1}\neq 0$ and $n_{F}\geq 2$,

\item $(v_{F})_{1}=0$, and $n_{F}$ and $n_{G}\geq 2$,
\end{enumerate}
then $h(\mathcal{B})=\ln 2$.
\end{lemma}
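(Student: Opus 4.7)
The plan is to prove $h(\mathcal{B})=\ln 2$ by sandwiching the entropy between matching upper and lower bounds. The upper bound $h(\mathcal{B})\leq \ln 2$ is automatic from the inclusion $X^{\mathcal{B}}\subseteq \mathcal{T}$: this gives $|B_{n}(X^{\mathcal{B}})|\leq |B_{n}(\mathcal{T})|=2^{2^{n}-1}$, and the computation of $h(\mathcal{T})=\ln 2$ already displayed in the text yields $\limsup_{n\to\infty}\ln^{2}|B_{n}(X^{\mathcal{B}})|/n\leq \ln 2$. The entire content of the lemma therefore lies in the lower bound. My strategy is to use the SNRE of Theorem~\ref{Thm : 1} to exhibit a sequence $x_{n}\in\{a_{n},b_{n}\}$ (or a thinned subsequence thereof) obeying either a single-step squaring inequality $x_{n}\geq x_{n-1}^{2}$ with $x_{2}\geq 2$, or a two-step inequality $x_{n}\geq 2\,x_{n-1}x_{n-2}^{2}$. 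The first yields $x_{n}\geq 2^{2^{n-2}}$ directly; the second, after taking logarithms, gives a linear inequality whose characteristic polynomial $r^{2}-r-2=(r-2)(r+1)$ has dominant root $2$, forcing $\ln x_{n}\geq C\cdot 2^{n}$ for some $C>0$. Either conclusion yields $\ln^{2}|B_{n}(X^{\mathcal{B}})|/n\to \ln 2$ and, by Lemma~\ref{thm:existence-entropy-limit}, also guarantees existence of the limit in \eqref{1}.

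Condition~(1) is immediate: the assumption $(1,1,1)\in\mathcal{B}$ puts the monomial $a_{n-1}^{2}$ into the recurrence for $a_{n}$, so $a_{n}\geq a_{n-1}^{2}$, while $n_{F}\geq 2$ gives the initial value $a_{2}=|B_{2}(X_{1}^{\mathcal{B}})|=n_{F}\geq 2$. For Condition~(2), $(v_{F})_{1}=0$ forces every monomial of $F$ to carry a factor of $b_{n-1}$, and the argument splits according to $(v_{G})_{1}$ and the location of the $b^{2}$ monomial. If $b^{2}\in G$, then $b_{n}\geq b_{n-1}^{2}$ and $b_{2}=n_{G}\geq 2$ give the single-step cascade. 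If $b^{2}\notin G$, $(v_{G})_{1}=1$, and $b^{2}\in F$, then dominance forces $v_{G}\geq v_{F}$ (since $(v_{F})_{1}=0<1=(v_{G})_{1}$), so $b_{n}\geq a_{n-1}^{2}$ and $a_{n}\geq b_{n-1}^{2}$, yielding $a_{n}\geq a_{n-2}^{4}$ and again a single-step cascade along even and odd indices separately. In the remaining subcases, one checks that $n_{F},n_{G}\geq 2$ together with $(v_{F})_{1}=0$ and dominance force either $F$ or $G$ to contain both bilinear monomials $a_{n-1}b_{n-1}$ and $b_{n-1}a_{n-1}$; combining the resulting identity $x_{n}\geq 2 a_{n-1}b_{n-1}$ with one of the already-derived squaring estimates $b_{n-1}\geq a_{n-2}^{2}$ or $a_{n-1}\geq b_{n-2}^{2}$ then produces the two-step inequality above.

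The main obstacle I anticipate is not any individual algebraic manipulation but the exhaustive verification that the three cascades just described together cover every admissible pair $(v_{F},v_{G})$ under the hypotheses of Condition~(2). The indicator vector $v_{F}$ must lie in $\{(0,1,1,0),(0,1,0,1),(0,0,1,1),(0,1,1,1)\}$, and $v_{G}$ must be comparable to $v_{F}$ with $n_{G}\geq 2$, so the list of admissible pairs is short but must be enumerated; in particular one must rule out degenerate configurations such as $F=G=a_{n-1}b_{n-1}$ in which neither a squaring monomial nor the full $ab+ba$ pair is available, and this is precisely where the joint hypotheses $n_{F}\geq 2$ and $n_{G}\geq 2$ are crucial. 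Verifying the cascade in each admissible configuration and then matching the resulting doubly-exponential lower bound against the universal upper bound completes the proof.
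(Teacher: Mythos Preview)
Your overall strategy---upper bound from the full tree-shift, lower bound via a squaring cascade extracted from the SNRE---is exactly the paper's approach, and Condition~(1) is handled correctly. However, your case analysis for Condition~(2) has two defects.

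First, your Case~B is vacuous. You posit $(v_{F})_{1}=0$, $(v_{F})_{4}=1$, $(v_{G})_{1}=1$, $(v_{G})_{4}=0$ and then claim ``dominance forces $v_{G}\geq v_{F}$''. But $v_{G}\geq v_{F}$ would require $(v_{G})_{4}\geq (v_{F})_{4}=1$, which you have excluded; and $v_{F}\geq v_{G}$ would require $(v_{F})_{1}\geq (v_{G})_{1}=1$, also excluded. So this configuration is simply not of dominant type and never arises under the lemma's hypotheses. This is not itself a gap in coverage, but it signals that the enumeration was not carried out carefully.

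Second, and this is a genuine gap: your ``remaining subcases'' argument fails for the pair $v_{F}=v_{G}=(0,1,1,0)$, i.e.\ $F=G=a_{n-1}b_{n-1}+b_{n-1}a_{n-1}$. This pair has $n_{F}=n_{G}=2$ and is of dominant type, so it satisfies the hypotheses; yet neither $a^{2}$ nor $b^{2}$ occurs in either $F$ or $G$, so there is no ``already-derived squaring estimate'' to combine with your bilinear bound $x_{n}\geq 2a_{n-1}b_{n-1}$. Your two-step inequality $x_{n}\geq 2x_{n-1}x_{n-2}^{2}$ cannot be obtained here by your stated mechanism. (The degenerate case you explicitly mention, $F=G=a_{n-1}b_{n-1}$, has $n_{F}=n_{G}=1$ and is excluded by hypothesis; the case above is different.) The fix the paper uses is to note that $a_{2}=b_{2}=2$ and $F=G$ imply $a_{n}=b_{n}$ for all $n\geq 2$ by induction, whence the system collapses to the single recurrence $a_{n}=2a_{n-1}^{2}$, which gives the required cascade directly. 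If you do the full enumeration you will find that, apart from this symmetric case, the only other pairs in your ``remaining'' bucket are $(v_{F},v_{G})=((0,1,1,1),(0,1,1,0))$ and $((0,1,1,0),(1,1,1,0))$, and for those your two-step argument does go through.
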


\begin{proof}
Without loss of generality, we assume $v_{F}\geq v_{G}$. It follows that $%
a_{n}\geq b_{n}$ for all $n\geq 2$ and 
\begin{equation*}
h(\mathcal{B})=\lim_{n\rightarrow \infty }\frac{\ln ^{2}c_{n}}{n}%
=\lim_{n\rightarrow \infty }\frac{\ln ^{2}\left( a_{n}+b_{n}\right) }{n}%
=\lim_{n\rightarrow \infty }\frac{\ln ^{2}a_{n}}{n}\text{.}
\end{equation*}%
Thus, it suffices to compute $\lim_{n\rightarrow \infty }\dfrac{\ln ^{2}a_{n}%
}{n}$. We divide the proof into two cases.

\noindent \textbf{1.} $(v_{F})_{1}\neq 0$. A similar discussion to the proof
of Lemma \ref{Prop : 1} indicates that $h(\mathcal{B})=\ln 2.$

\noindent \textbf{2.} $(v_{F})_{1}=0$. We divide this case into several
sub-cases.

\noindent \textbf{2-a.} $v_{F}=(0,1,1,1)$. In this case, since $n_{G}\geq 2$%
, it is seen that $v_{G}=(0,1,1,0)$, $(0,0,1,1)$, or $(0,1,0,1)$. For the
case where $v_{G}=(0,1,1,0)$, we define a SNRE as follows. 
\begin{equation*}
\left\{ 
\begin{array}{l}
d_{n}=d_{n-1}e_{n-1}+e_{n-1}d_{n-1}, \\ 
e_{n}=d_{n-1}e_{n-1}+e_{n-1}d_{n-1}, \\ 
d_{2}=e_{2}=2.%
\end{array}%
\right.
\end{equation*}%
Then we have $d_{n}=e_{n}$, $a_{n}\geq d_{n}$, and $b_{n}\geq e_{n}$ for $%
n\geq 2$, which yields that 
\begin{equation*}
\ln 2\geq h(\mathcal{B})=\lim_{n\rightarrow \infty }\frac{\ln ^{2}c_{n}}{n}%
=\lim_{n\rightarrow \infty }\frac{\ln ^{2}a_{n}}{n}\geq \lim_{n\rightarrow
\infty }\frac{\ln ^{2}d_{n}}{n}=\ln 2.
\end{equation*}%
Hence $h(\mathcal{B})=\ln 2$. Suppose $v_{G}=(0,0,1,1)$. Define a SNRE as
follows. 
\begin{equation*}
\left\{ 
\begin{array}{l}
d_{n}=d_{n-1}e_{n-1}, \\ 
e_{n}=e_{n-1}d_{n-1}+e_{n-1}^{2}, \\ 
d_{2}=1\text{ and }e_{2}=2.%
\end{array}%
\right.
\end{equation*}%
Thus we have $a_{n}\geq d_{n}$, $b_{n}\geq e_{n}\geq d_{n}$ for $n\geq 2$.
Since $v_{F}>v_{G}$, it follows that 
\begin{align*}
\ln 2\geq h(\mathcal{B})& =\lim_{n\rightarrow \infty }\frac{\ln ^{2}c_{n}}{n}%
=\lim_{n\rightarrow \infty }\frac{\ln ^{2}a_{n}}{n}\geq \lim_{n\rightarrow
\infty }\frac{\ln ^{2}d_{n}}{n} \\
& \geq \lim_{n\rightarrow \infty }\frac{\ln ^{2}e_{n-1}}{n}=\ln 2.
\end{align*}%
Note that the same proof of case \textbf{1} works for $\lim_{n\rightarrow
\infty }\frac{\ln ^{2}e_{n-1}}{n}=\ln 2$. The case where $v_{G}=(0,1,0,1)$
can be demonstrated analogously. This completes the proof.

\noindent \textbf{2-b.} $v_{F}=(0,0,1,1)$ or $(0,1,0,1).$ We deal with the
case $v_{F}=(0,0,1,1)$. The other can be indicated similarly. Since $%
n_{G}\geq 2$, $v_{G}$ has only one possibility $v_{G}=(0,0,1,1)$. It follows
immediately that $a_{n}=b_{n}$ for all $n$ and the SNRE is transformed into $%
a_{n}=2a_{n-1}^{2},$ $a_{2}\geq 2.$ Thus $h(\mathcal{B})=\ln 2.$

\noindent \textbf{2-c.} $v_{F}=(0,1,1,0).$ In this case, $v_{G}=(0,1,1,0)$.
Applying the same argument given in \textbf{2-b} yields that $h(\mathcal{B}%
)=\ln 2$. The proof is thus complete.
\end{proof}

\begin{remark}
\label{Rmk: 1} If we soften the condition (ii) in Lemma \ref{Lma : 2} to $%
n_{G}<2$, then, in the proof of \textbf{2-a}, we only need to deal with $%
v_{G}=(0,0,0,1)$ (the cases where $v_{G}=(0,1,0,0)$ and $v_{G}=(0,0,1,0)$
are similar). In this case, it is seen that $b_{n}=1$ for all $n\geq 2$ and
the equation becomes $a_{n}=2a_{n-1}+a_{n-1}^{2}$. In this case we still
have $h(\mathcal{B})=\ln 2$. In \textbf{2-b}, it remains to elaborate the
case where $v_{F}=(0,0,1,1)$. It follows that $v_{G}=(0,0,1,0)$ or $%
v_{G}=(0,0,0,1).$ If $v_{G}=(0,0,1,0)$, then $b_{n}\geq a_{n-1}$ for all $%
n\geq 2$. Define the a SNRE as follows. 
\begin{equation*}
\left\{ 
\begin{array}{l}
d_{n}=d_{n-1}e_{n-1}, \\ 
e_{n}=d_{n-1}e_{n-1}, \\ 
d_{2}=2\text{, }e_{2}=1.%
\end{array}%
\right.
\end{equation*}%
It is evident that $a_{n}\geq d_{n}$ and $b_{n}=e_{n}$ for $n\geq 2$. Thus
we have $d_{n}=d_{n-1}^{2}$ with $d_{2}=2$. A straightforward examination
asserts that $h(\mathcal{B})=\ln 2$. Finally, $v_{F}=(0,1,1,0)$ infers that $%
v_{G}=(0,1,0,0)$ or $v_{G}=(0,0,1,0)$. Both cases can be analyzed
analogously as above.
\end{remark}

Corollary \ref{Cor : 1} is generalized from Lemma \ref{Lma : 2} based on the
discussion in Remark \ref{Rmk: 1}.

\begin{corollary}
\label{Cor : 1} If the SNRE corresponding to a given $\mathcal{B}$ is of the
dominant-type with $n_{F}\geq 2$, then $h(\mathcal{B})=\ln 2$.
\end{corollary}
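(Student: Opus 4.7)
The plan is to deduce Corollary \ref{Cor : 1} from Lemma \ref{Lma : 2} together with the observations made in Remark \ref{Rmk: 1}, by a clean case split on the first coordinate of $v_F$ and on $n_G$. As in the proof of Lemma \ref{Lma : 2}, the first step is to reduce to the situation $v_F \geq v_G$: since the SNRE is of dominant-type we may, up to relabelling the symbols $\{1,2\}$ (which amounts to interchanging the roles of $F$ and $G$ and of $a_n$ and $b_n$), assume $v_F \geq v_G$; this relabelling only strengthens the hypothesis $n_F \geq 2$, because if $v_G \geq v_F$ with $n_F \geq 2$, then after the swap the new dominant vector still has at least two nonzero entries. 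Under $v_F \geq v_G$, one has $a_n \geq b_n$ for all $n \geq 2$ and $h(\mathcal{B}) = \lim_{n \to \infty} \ln^2 a_n / n$, so it is enough to sandwich $a_n$ between two doubly-exponential sequences of growth rate $2$.

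Next I would split into the following three cases and match each to a result already on the table.

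\textbf{Case 1.} $(v_F)_1 \neq 0$. Here the recurrence for $a_n$ contains the monomial $a_{n-1}^2$, and since $n_F \geq 2$ there is at least one further monomial of size at most $a_{n-1}^2$. Hence $a_n = a_{n-1}^2 + g_{n-1}$ with $0 \leq g_{n-1} \leq a_{n-1}$ (for $n$ large enough that $a_{n-1} \geq 1$), and Lemma \ref{Prop : 1} gives $\lim_n \ln^2 a_n / n = \ln 2$. This is exactly case~\textbf{1} of Lemma \ref{Lma : 2}.

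\textbf{Case 2.} $(v_F)_1 = 0$ and $n_G \geq 2$. This is exactly condition (ii) of Lemma \ref{Lma : 2}, so the conclusion $h(\mathcal{B}) = \ln 2$ is immediate.

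\textbf{Case 3.} $(v_F)_1 = 0$ and $n_G \leq 1$. This is the situation analysed in Remark \ref{Rmk: 1}. There is nothing to prove for $n_G = 0$ (which would force $b_n = 0$ and contradict nonemptiness of $X_2^{\mathcal{B}}$ in the relevant sub-cases, or else be absorbed by just dropping the second symbol); the interesting sub-cases are $v_F \in \{(0,1,1,1),(0,0,1,1),(0,1,0,1),(0,1,1,0)\}$ paired with some $v_G$ with a single nonzero coordinate, and each of them was treated in Remark \ref{Rmk: 1} by exhibiting an explicit lower SNRE $\{d_n,e_n\}$ with $a_n \geq d_n$ and $d_n$ doubly exponential of rate $2$, together with the trivial upper bound coming from the full tree-shift.

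The main (very mild) obstacle is bookkeeping: verifying that the list of pairs $(v_F, v_G)$ with $v_F \geq v_G$, $(v_F)_1 = 0$, $n_F \geq 2$, and $n_G \leq 1$ is exactly the list handled in Remark \ref{Rmk: 1}, so that no sub-case is missed. Once this verification is done, each sub-case directly yields $h(\mathcal{B}) = \ln 2$, and the three cases together exhaust the hypothesis of the corollary, completing the proof.
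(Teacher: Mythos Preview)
Your approach is exactly the paper's: the corollary is stated there as an immediate consequence of Lemma \ref{Lma : 2} together with the case-by-case extension in Remark \ref{Rmk: 1}, and your three-way split on $(v_F)_1$ and $n_G$ reproduces precisely that combination. One small slip in Case~1: the residual $g_{n-1}$ is a sum of up to three monomials each bounded by $a_{n-1}^2$, so you only get $g_{n-1}\leq 3a_{n-1}^2$, not $g_{n-1}\leq a_{n-1}$, and Lemma \ref{Prop : 1} does not apply verbatim; the sandwich $a_{n-1}^2\leq a_n\leq 4a_{n-1}^2$ still gives the conclusion, which is what the paper means by ``a similar discussion to the proof of Lemma \ref{Prop : 1}''.
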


\begin{example}
Suppose the basic set $\mathcal{B}$ is given by 
\begin{equation*}
\mathcal{B}=\{(1,1,1),(1,2,2),(2,2,2)\}.
\end{equation*}%
It follows that the indicator vectors of the corresponding SNRE are $%
v_{F}=(1,0,0,1)$ and $v_{G}=(0,0,0,1)$, and the SNRE is of the
dominant-type. Corollary \ref{Cor : 1} elaborates that $h(\mathcal{B})=\ln 2$%
.
\end{example}

The following corollary comes immediately from Corollary \ref{Cor : 1}, with
the proof omitted.

\begin{corollary}
\label{Cor : 2} If the SNRE corresponding to a given $\mathcal{B}$ is of the
form $v_{F}=(1,\ast ,\ast ,\ast )$ with $n_{F}\geq 2$ or $v_{G}=(\ast ,\ast
,\ast ,1)$ with $n_{G}\geq 2$. Then $h(\mathcal{B})=\ln 2$.
\end{corollary}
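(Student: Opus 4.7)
The plan is to deduce Corollary \ref{Cor : 2} from Corollary \ref{Cor : 1} via a restriction-monotonicity argument. Since $X^{\mathcal{B}} \subseteq \mathcal{T}$ and the full tree-shift with $d = k = 2$ satisfies $h(\mathcal{T}) = \ln 2$, one has the universal upper bound $\limsup_{n} \ln^{2}|B_{n}(X^{\mathcal{B}})|/n \leq \ln 2$ for every basic set $\mathcal{B}$. It therefore suffices to establish a matching lower bound in each of the two hypothesized cases.

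Assume first that $v_F = (1, \ast, \ast, \ast)$ with $n_F \geq 2$. Introduce the sub-basic set
\[
\mathcal{B}' = \{(1, j, k) \in \mathcal{B}\},
\]
obtained by discarding every allowed $2$-block rooted at the symbol $2$. Because $\mathcal{B}' \subseteq \mathcal{B}$ one has $X^{\mathcal{B}'} \subseteq X^{\mathcal{B}}$ and hence $|B_n(X^{\mathcal{B}'})| \leq |B_n(X^{\mathcal{B}})|$, so any lower bound on the entropy of $\mathcal{B}'$ descends to one on the entropy of $\mathcal{B}$. By Theorem \ref{Thm : 1}, the SNRE attached to $\mathcal{B}'$ has indicator vectors $v_{F'} = v_F$ and $v_{G'} = (0,0,0,0)$; it is therefore (trivially) of dominant-type with $n_{F'} = n_F \geq 2$. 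Corollary \ref{Cor : 1} then delivers $h(\mathcal{B}') = \ln 2$, and sandwiching with the upper bound forces $h(\mathcal{B}) = \ln 2$.

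The symmetric case $v_G = (\ast, \ast, \ast, 1)$ with $n_G \geq 2$ is handled identically, taking $\mathcal{B}' = \{(2, j, k) \in \mathcal{B}\}$; equivalently, one may first swap the symbols $1 \leftrightarrow 2$, which preserves the entropy and converts the hypothesis on $v_G$ into the hypothesis on $v_F$ just treated.

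No genuine obstacle arises. The only point worth verifying is that after passing to $\mathcal{B}'$ one has $b_n' = 0$ for all $n \geq 2$, so the recursion reduces to the purely quadratic $a_n' = (a_{n-1}')^2$ with $a_2' = n_F \geq 2$; this is the most benign input to Lemma \ref{Prop : 1} and Lemma \ref{Lma : 2} underlying Corollary \ref{Cor : 1}, and it cleanly delivers the doubly-exponential growth with rate $\ln 2$.
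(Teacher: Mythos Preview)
Your restriction argument has a genuine gap. After passing to $\mathcal{B}'=\{(1,j,k)\in\mathcal{B}\}$, the symbol $2$ has no allowed $2$-block rooted at it, so it cannot appear at any node of any tree in $X^{\mathcal{B}'}$. Hence $X^{\mathcal{B}'}$ consists of the single all-$1$ tree, $|B_n(X^{\mathcal{B}'})|=1$ for every $n$, and the lower bound you seek collapses. Concretely, your claim $a_2'=n_F$ is false: by definition $a_2'=|B_2(X_1^{\mathcal{B}'})|$ counts only $2$-blocks that extend to full trees in $X^{\mathcal{B}'}$, and with symbol $2$ inessential only $(1,1,1)$ survives, so $a_2'=1$ and the reduced recurrence $a_n'=(a_{n-1}')^2$ stays at $1$ forever. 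Your appeal to Corollary~\ref{Cor : 1} therefore lands in a degenerate edge case (one symbol inessential) that the paper tacitly excludes throughout.

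The paper's intended route does not restrict the basic set. One works directly with the original $a_n$: since $(v_F)_1=1$, the term $a_{n-1}^2$ occurs in $F$, so $a_n\ge a_{n-1}^2$; and in the original $\mathcal{B}$ both symbols are essential, so all $n_F\ge 2$ blocks rooted at $1$ are realized and $a_2\ge 2$. Then $a_n\ge 2^{2^{n-2}}$ and $c_n\ge a_n$ give $\liminf_n \ln^2 c_n/n\ge\ln 2$, which together with the universal upper bound yields $h(\mathcal{B})=\ln 2$. This is precisely Case~1 of the proof of Lemma~\ref{Lma : 2}; the dominant-type hypothesis there served only to get $c_n\le 2a_n$, a direction that is unnecessary once the upper bound $\ln 2$ is in hand.
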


Corollary \ref{Cor : 2} demonstrates that positive entropy derives from $F$
(resp. $G$) containing at least two terms and one of which is $a_{n-1}^{2}$
(resp. $b_{n-1}^{2}$). To illustrate the entropy of tree-shifts of finite
type completely, we introduce another type of SNRE. Two vectors $v$ and $w$
are \emph{complementary} if $v+w$ dominates $e_{4}$, where $e_{4}=(1,1,1,1)$%
. An SNRE obtained from a given basic set $\mathcal{B}$ is of the \emph{%
complementary-type} if the corresponding indicator vectors $v_{F}$ and $%
v_{G} $ are complementary.

\begin{lemma}
\label{Lma : 1} Suppose a basic set $\mathcal{B}$ is given. If the
corresponding SNRE is of complementary-type, then $h(\mathcal{B})=\ln 2$.
\end{lemma}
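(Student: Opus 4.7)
The plan is to sandwich the double logarithm of $c_n = a_n + b_n$ between two sequences that both tend to $n\ln 2$, giving $h(\mathcal{B}) = \ln 2$. The key observation is that the complementary-type condition translates directly into the simple squaring inequality $c_n \geq c_{n-1}^2$, after which everything reduces to careful bookkeeping with iterated logarithms.

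First I would apply Theorem~\ref{Thm : 1} to rewrite
\[
c_n = a_n + b_n = \sum_{(j,k)\in\{1,2\}^2}\bigl(f_{(j,k)} + g_{(j,k)}\bigr)\,\Gamma_{n-1}(j)\Gamma_{n-1}(k),
\]
where $f_{(j,k)}$ and $g_{(j,k)}$ are the entries of $v_F$ and $v_G$, respectively. The complementary hypothesis $v_F + v_G \geq e_4$ makes every coefficient above at least $1$, so
\[
c_n \;\geq\; \sum_{(j,k)\in\{1,2\}^2} \Gamma_{n-1}(j)\Gamma_{n-1}(k) \;=\; (a_{n-1}+b_{n-1})^2 \;=\; c_{n-1}^2.
\]
Iterating yields $c_n \geq c_2^{2^{n-2}}$ for $n\geq 2$. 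Here $c_2 = |\mathcal{B}| \geq 4$, since the complementary condition forces $\mathcal{B}$ to contain at least one of $(1,j,k)$ or $(2,j,k)$ for each of the four pairs $(j,k) \in \{1,2\}^2$. Taking $\ln$ twice I would obtain $\ln^2 c_n \geq (n-2)\ln 2 + \ln\ln c_2$, whence $\liminf_{n\to\infty} \frac{\ln^2 c_n}{n} \geq \ln 2$. As a byproduct, this lower bound also gives $\lim_n \ln c_n / 2^n > 0$, so Lemma~\ref{thm:existence-entropy-limit} ensures the defining limit \eqref{1} genuinely exists.

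For the matching upper bound I would use the trivial inclusion $X^{\mathcal{B}} \subseteq \mathcal{T}$, which yields $c_n \leq |B_n(\mathcal{T})| = 2^{2^n - 1}$. Applying $\ln$ twice gives
\[
\ln^2 c_n \;\leq\; \ln\!\bigl((2^n-1)\ln 2\bigr) \;\leq\; n\ln 2 + O(1),
\]
so $\limsup_{n\to\infty} \frac{\ln^2 c_n}{n} \leq \ln 2$. Combining this with the lower bound forces $h(\mathcal{B}) = \ln 2$. There is no real obstacle in this argument; the whole lemma is essentially the one-line observation $c_n \geq c_{n-1}^2$, which says that a complementary-type SNRE grows as fast as the full tree-shift up to bounded factors in the double-log scale.
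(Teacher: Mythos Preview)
Your proof is correct and essentially identical to the paper's: both derive the key inequality $c_n \geq c_{n-1}^2$ directly from the complementary hypothesis $v_F + v_G \geq e_4$, use $c_2 \geq 4$, and conclude $h(\mathcal{B}) \geq \ln 2$, with the matching upper bound being immediate from the full tree-shift. You are a bit more explicit about the upper bound and about invoking Lemma~\ref{thm:existence-entropy-limit} for the existence of the limit, but the core argument is the same (one small quibble: strictly speaking $c_2 \leq |\mathcal{B}|$ rather than equality, though the needed bound $c_2 \geq 4$ still holds and is exactly what the paper asserts).
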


\begin{proof}
If the SNRE is of the complementary-type. Then we have 
\begin{equation*}
c_{n}=a_{n}+b_{n}\geq a_{n-1}^{2}+2a_{n-1}b_{n-1}+b_{n-1}^{2}=c_{n-1}^{2}
\end{equation*}%
with $c_{2}=a_{2}+b_{2}\geq 4$. Suppose the sequence $\{d_{n}\}$ satisfies $%
d_{n}=\left( d_{n-1}\right) ^{2}$ and $d_{2}=4.$ It can be easily checked
that $c_{n}\geq d_{n}$ for all $n\geq 2$. Hence, we have 
\begin{equation*}
h(\mathcal{B})=\lim_{n\rightarrow \infty }\frac{\ln ^{2}c_{n}}{n}\geq
\lim_{n\rightarrow \infty }\frac{\ln ^{2}d_{n}}{n}\geq \ln 2.
\end{equation*}%
This completes the proof.
\end{proof}

\begin{corollary}
\label{Cor : 3} If the SNRE corresponding to a given $\mathcal{B}$ satisfies 
$v_{F}+v_{G}\geq (1,0,1,1)$ or $(1,1,0,1)$, then $h(\mathcal{B})=\ln 2$.
\end{corollary}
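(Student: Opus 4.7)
By symmetry I would only treat the case $v_F+v_G \geq (1,0,1,1)$; the case $v_F+v_G \geq (1,1,0,1)$ is handled identically because it also produces the inequality below, with the cross term $a_{n-1}b_{n-1}$ arising from the coordinate $(1,2)$ instead of $(2,1)$.

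The first step is to translate the dominance hypothesis into a lower bound on $c_n$. Under the lexicographic ordering $(1,1),(1,2),(2,1),(2,2)$ of $\mathcal{A}^2$, the hypothesis says that each of the monomials $a_{n-1}^2$, $a_{n-1}b_{n-1}$ and $b_{n-1}^2$ appears in at least one of the two sums defining $a_n$ or $b_n$ via Theorem~\ref{Thm : 1}. Adding the relations for both symbols yields
\begin{equation*}
c_n \;=\; a_n+b_n \;\geq\; a_{n-1}^{2} + a_{n-1}b_{n-1} + b_{n-1}^{2}.
\end{equation*}

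The main (and essentially only non-mechanical) step is to absorb the missing $a_{n-1}b_{n-1}$ cross term, which was present in the complementary-type identity used in Lemma~\ref{Lma : 1}, into a harmless multiplicative constant. Since $a_{n-1},b_{n-1}\geq 0$, AM-GM gives $a_{n-1}b_{n-1}\leq \tfrac{1}{4}(a_{n-1}+b_{n-1})^{2}$, hence
\begin{equation*}
a_{n-1}^{2}+a_{n-1}b_{n-1}+b_{n-1}^{2} \;=\; (a_{n-1}+b_{n-1})^{2}-a_{n-1}b_{n-1} \;\geq\; \tfrac{3}{4}\,c_{n-1}^{2},
\end{equation*}
so that $c_n\geq \tfrac{3}{4}\,c_{n-1}^{2}$.

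From here I would repeat the comparison used in the proof of Lemma~\ref{Lma : 1}. The hypothesis forces at least three distinct $2$-blocks in $\mathcal{B}$ (one per dominated coordinate), so $c_2=|\mathcal{B}|\geq 3 > 4/3$. Setting $y_n=\ln c_n$ and $C=\ln(4/3)$, the recurrence inequality becomes $y_n - C \geq 2(y_{n-1}-C)$, which iterates to $y_n \geq 2^{n-2}(y_2-C)+C$. A second logarithm then gives $\ln^{2}c_n \geq (n-2)\ln 2 + O(1)$, so $\liminf_n \frac{\ln^{2}c_n}{n}\geq \ln 2$. Combined with the trivial upper bound $h(\mathcal{B})\leq h(\mathcal{T}) = \ln 2$ from the full tree-shift, this yields $h(\mathcal{B})=\ln 2$. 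I expect no real obstacle: the AM-GM constant $\tfrac{3}{4}$ is irrelevant at the level of double-logarithmic rates, and the rest is cosmetic surgery on the argument of Lemma~\ref{Lma : 1}.
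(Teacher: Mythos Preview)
Your proof is correct and follows essentially the same route as the paper: derive $c_n \geq a_{n-1}^2 + a_{n-1}b_{n-1} + b_{n-1}^2$ from the dominance hypothesis, bound this below by a constant multiple of $c_{n-1}^2$, and iterate. The only cosmetic difference is the constant---you use AM--GM to obtain $\tfrac{3}{4}\,c_{n-1}^2$, whereas the paper uses the cruder (but sufficient) bound $\tfrac{1}{2}\,c_{n-1}^2$ coming directly from $2(a^2+ab+b^2)\geq (a+b)^2$; either way the constant disappears at the double-logarithmic scale.
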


\begin{proof}
Notably, 
\begin{equation*}
c_{n}=a_{n}+b_{n}\geq a_{n-1}^{2}+a_{n-1}b_{n-1}+b_{n-1}^{2}\geq \frac{1}{2}%
c_{n-1}^{2}.
\end{equation*}%
Suppose $\{d_{n}\}$ satisfies $d_{n}=\frac{1}{2}d_{n-1}^{2}$ and $d_{2}\geq
3 $. Then we have $c_{n}\geq d_{n}$, and hence $h(\mathcal{B})=\ln 2$. This
completes the proof.
\end{proof}

\begin{example}
Suppose the basic set $\mathcal{B}$ is given as 
\begin{equation*}
\mathcal{B}=\{(1,1,1),(1,1,2),(2,2,1),(2,2,2)\}.
\end{equation*}%
It is seen that the indicator vectors of the corresponding SNRE are $%
v_{F}=(1,1,0,0)$ and $v_{G}=(0,0,1,1)$. Hence the SNRE is of
complementary-type. Lemma \ref{Lma : 1} asserts that the entropy of
tree-shift $X^{\mathcal{B}}$ is $h(\mathcal{B})=\ln 2$.
\end{example}

With the discussion above, we are in the position of demonstrating the main
result of this section.

\begin{theorem}
\label{Thm: 3}Suppose $d=k=2$. Then the entropy of the tree-shift of finite
type generated from a basic set $\mathcal{B}$ is either $h(\mathcal{B})=0$
or $h(\mathcal{B})=\ln 2$.
\end{theorem}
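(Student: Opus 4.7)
The strategy is to establish the dichotomy $h(\mathcal{B}) \in \{0, \ln 2\}$ by case analysis on the indicator pair $(v_F, v_G) \in \{0,1\}^4 \times \{0,1\}^4$. First I would record the universal upper bound: since $F+G$ has at most eight monomials and each is a degree-$2$ product of $a_{n-1}, b_{n-1}$ bounded by $c_{n-1}^2$, one has $c_n \le 8\,c_{n-1}^2$; iterating gives $\ln c_n = O(2^n)$, hence $h(\mathcal{B}) \le \ln 2$. It therefore suffices to prove that $h(\mathcal{B}) > 0$ forces $h(\mathcal{B}) = \ln 2$.

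The analysis is organized by $(n_F, n_G)$. When $n_F \ge 2$ and $n_G \ge 2$, we have $a_2, b_2 \ge 2$, and each monomial evaluated at a pair of entries $\ge 2$ is at least $\min(a_{n-1}, b_{n-1})^2$; inductively $\min(a_n, b_n) \ge \min(a_{n-1}, b_{n-1})^2$, which forces doubly-exponential growth of $c_n$ and hence $h(\mathcal{B}) = \ln 2$. When $n_F \le 1$ and $n_G \le 1$, the initial data $a_2, b_2 \le 1$ is preserved by the recurrences (each of $F$ and $G$ has at most one monomial, of value at most $1$), so $c_n \le 2$ and $h(\mathcal{B}) = 0$. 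What remains is the \textbf{mixed regime}, in which exactly one of $n_F, n_G$ is $\ge 2$. Using the symbol-swap symmetry $1 \leftrightarrow 2$, which sends $(v_F, v_G)$ to the pair with reversed coordinates and swapped roles and preserves $h$, I may restrict to $n_F \ge 2$ and $n_G \le 1$.

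In the mixed regime, Corollary \ref{Cor : 2} disposes of $(v_F)_1 = 1$, yielding $h(\mathcal{B}) = \ln 2$. So suppose $(v_F)_1 = 0$ with $n_F \ge 2$, i.e.\ $v_F \in \{(0,1,1,0),\ (0,1,0,1),\ (0,0,1,1),\ (0,1,1,1)\}$. I would split further on $v_G$: if $v_G = 0$ or $v_G = (0,0,0,1)$, then $b_n$ is forced to $0$ or $1$, so the $a$-recurrence collapses to a one-variable recurrence of at most exponential growth (e.g.\ $a_n = 2a_{n-1}$ or $a_n = a_{n-1} + 1$), giving $h(\mathcal{B}) = 0$. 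If instead $v_G$ has its unique $1$ in position $1$, $2$, or $3$, I would substitute the $b$-recurrence into the $a$-recurrence and pass to logs: setting $L_n = \ln a_n$, a linear recurrence of the form $L_n = L_{n-1} + 2 L_{n-2} + O(1)$ emerges (with analogous forms in the other sub-cases), whose characteristic polynomial $(x-2)(x+1)$ has dominant root $2$, so $L_n \sim C \cdot 2^n$ and $h(\mathcal{B}) = \ln 2$.

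The main obstacle is the mixed regime. One might worry that Fibonacci-like log recurrences could produce intermediate growth exponents (a golden-ratio-type rate, say), but the rigid degree-$2$ structure of every monomial, together with the coupling between the $a$- and $b$-recurrences, forces each resulting characteristic polynomial to have dominant root exactly $2$. The computational burden is verifying this in each residual sub-case and checking that the initial data $a_2 = n_F \ge 2$ genuinely ignites the growth rather than collapsing the orbit to $1$.
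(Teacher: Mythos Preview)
Your overall strategy and case organization are sound, and in places cleaner than the paper's. Your treatment of the regime $n_F \ge 2$, $n_G \ge 2$ via the single inequality $\min(a_n, b_n) \ge \min(a_{n-1}, b_{n-1})^2$ disposes of that entire block at once, whereas the paper organizes instead by $(v_F)_1$ and $n_F$, invoking Lemma~\ref{Lma : 2}, Corollaries~\ref{Cor : 1}--\ref{Cor : 3}, and Lemma~\ref{Lma : 1} according to three tables, and then finishes six residual pairs $(1)$--$(6)$ by direct computation. Your split into both-large\,/\,both-small\,/\,mixed is a genuinely different and more economical decomposition. You also correctly flag several zero-entropy cases in the mixed regime (e.g.\ $v_F \in \{(0,1,1,1),(0,0,1,1),(0,1,0,1)\}$ with $v_G = (0,0,0,1)$, where $b_n \equiv 1$ forces $a_n$ to grow only exponentially) which the paper's Remark~\ref{Rmk: 1} and Corollary~\ref{Cor : 1} mis-attribute as $\ln 2$; the theorem survives regardless because $0 \in \{0,\ln 2\}$.

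There is, however, a real inaccuracy in your mixed-regime argument. You assert that substituting the $b$-recurrence into the $a$-recurrence and passing to $L_n = \ln a_n$ yields a \emph{linear} recurrence whose characteristic polynomial has dominant root $2$. This is only valid when the substituted $F$ collapses to a single monomial: for $v_F = (0,1,1,0)$, $v_G = (1,0,0,0)$ one gets $a_n = 2a_{n-1}a_{n-2}^2$, hence $L_n = L_{n-1} + 2L_{n-2} + \ln 2$, exactly as you describe (and as the paper does in its case~(4)). But for $v_F = (0,0,1,1)$, $v_G = (1,0,0,0)$ one obtains $a_n = a_{n-1}a_{n-2}^2 + a_{n-2}^4$, and $\ln$ of that sum is not linear in $L_{n-1}, L_{n-2}$; there is no characteristic polynomial to appeal to. The conclusion $h(\mathcal{B}) = \ln 2$ is still correct in all of these sub-cases, but it has to come from one-sided inequalities---here $a_n \ge a_{n-2}^4$ gives $L_n \ge 4L_{n-2}$; for $v_G \in \{(0,1,0,0),(0,0,1,0)\}$ one first checks $a_n \ge b_n$ and then $b_n = a_{n-1}b_{n-1} \ge b_{n-1}^2$---sandwiched against your global upper bound $h(\mathcal{B}) \le \ln 2$. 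That sandwich argument is what the paper actually runs in its residual cases, and is what your proposal needs in place of the characteristic-polynomial claim whenever $F$ retains more than one term after substitution.
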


\begin{proof}
The demonstration is implemented in cases.

\noindent \textbf{I.} $v_{F}=(1,\ast ,\ast ,\ast )$ with $n_{F}\geq 2$. This
case comes directly from Corollary \ref{Cor : 2}.

\noindent \textbf{II.} $v_{F}=(0,\ast ,\ast ,\ast )$ with $n_{F}=3$, i.e., $%
v_{F}=(0,1,1,1)$. In this case, the investigation of $v_{G}=(1,0,0,0)$ and $%
v_{G}=(0,\ast ,\ast ,\ast )$ is conducted by Lemma \ref{Lma : 1} and
Corollary \ref{Cor : 1}, respectively.

\noindent \textbf{III.} $v_{F}=(0,\ast ,\ast ,\ast )$ with $n_{F}=2$. The
Pigeonhole Principle indicates that at least one of the center two entries
of $v_{F}$ is $1$. The elucidation of $v_F = (0, 1, 0, 1)$ can be treated
similarly as the discussion of $v_f = (0, 0, 1, 1)$. The detailed
implementation is divided into several sub-cases and is listed in the tables
below.

\noindent \textbf{III-1.} $n_{G} \geq 3$.

\begin{tabular}{c|cc}
& $v_{F}=(0,1,1,0)$ & $v_{F}=(0,0,1,1)$ \\[0.2ex] \hline
&  &  \\[-2ex] 
$v_{G}=(1,1,1,1)$ & Lemma \ref{Lma : 2} & \text{Lemma \ref{Lma : 2}} \\%
[0.2ex] 
$v_{G}=(1,0,1,1)$ & Corollary \ref{Cor : 2} & \text{Corollary \ref{Cor : 2}}
\\[0.2ex] 
$v_{G}=(1,1,0,1)$ & Corollary \ref{Cor : 2} & \text{Corollary \ref{Cor : 2}}
\\[0.2ex] 
$v_{G}=(0,1,1,1)$ & Corollary \ref{Cor : 2} & \text{Corollary \ref{Cor : 2}}
\\[0.2ex] 
$v_{G}=(1,1,1,0)$ & Lemma \ref{Lma : 2} & \text{Lemma \ref{Lma : 1}}%
\end{tabular}

\noindent \textbf{III-2.} $n_{G}=2$

\begin{tabular}{c|cc}
& $v_{F}=(0,1,1,0)$ & $v_{F}=(0,0,1,1)$ \\[0.2ex] \hline
&  &  \\[-2ex] 
$v_{G}=(1,0,0,1)$ & \text{Lemma \ref{Lma : 1}} & \text{Corollary \ref{Cor :
3}} \\[0.2ex] 
$v_{G}=(1,0,1,0)$ & (1) & \text{Corollary \ref{Cor : 3}} \\[0.2ex] 
$v_{G}=(1,1,0,0)$ & (2) & \text{Lemma \ref{Lma : 1}} \\[0.2ex] 
$v_{G}=(0,1,0,1)$ & \text{Corollary \ref{Cor : 2}} & \text{Corollary \ref%
{Cor : 2}} \\[0.2ex] 
$v_{G}=(0,1,1,0)$ & $a_{n}=b_{n}$ for all $n\geq 2$ & (3) \\[0.2ex] 
$v_{G}=(0,0,1,1)$ & \text{Corollary \ref{Cor : 2}} & \text{Corollary \ref%
{Cor : 2}}%
\end{tabular}

\noindent \textbf{III-3.} $n_{G}=1$

\begin{tabular}{c|cc}
& $v_{F}=(0,1,1,0)$ & $v_{F}=(0,0,1,1)$ \\[0.2ex] \hline
&  &  \\[-2ex] 
$v_{G}=(1,0,0,0)$ & (4) & \text{Corollary \ref{Cor : 3}} \\[0.2ex] 
$v_{G}=(0,1,0,0)$ & \text{Lemma \ref{Lma : 2}} & (5) \\[0.2ex] 
$v_{G}=(0,0,1,0)$ & \text{Lemma \ref{Lma : 2}} & \text{Lemma \ref{Lma : 2}}
\\[0.2ex] 
$v_{G}=(0,0,0,1)$ & (6) & \text{Lemma \ref{Lma : 2}}%
\end{tabular}

It only remains to deal with Cases (1)-(6). Notably, Cases (1)-(3) can be
treated similarly. The equation for Case (1) is 
\begin{equation*}
\left\{ 
\begin{array}{l}
a_{n}=2a_{n-1}b_{n-1}, \\ 
b_{n}=a_{n-1}^{2}+a_{n-1}b_{n-1}, \\ 
a_{2}=b_{2}=2.%
\end{array}%
\right.
\end{equation*}%
We claim that $a_{n}=b_{n}$ for all $n\geq 2$. Observe that $a_{2}=b_{2}$.
Suppose $a_{k}=b_{k}$ for some $2\leq k\in \mathbb{N}$. Then 
\begin{equation*}
a_{k+1}=2a_{k}b_{k}=a_{k}b_{k}+a_{k}b_{k}=a_{k}^{2}+a_{k}b_{k}=b_{k+1}.
\end{equation*}%
The claim follows by mathematical induction. Thus the system is reduced to
solving $a_{n}$ which satisfies $a_{n}=2a_{n-1}^{2}$ and $a_{2}=2$. Hence we
have $h(\mathcal{B})=\ln 2$.

In Case (4), we have the corresponding SNRE as follows. 
\begin{equation*}
\left\{ 
\begin{array}{l}
a_{n}=2a_{n-1}b_{n-1}, \\ 
b_{n}=a_{n-1}^{2}, \\ 
a_{2}=2,\text{ }b_{2}=1.%
\end{array}%
\right.
\end{equation*}%
This is equivalent to solving $a_{n}=2a_{n-1}a_{n-2}^{2}$ with $a_{2}=2.$
Suppose $\{d_{n}\}$ is defined as 
\begin{equation*}
d_{n}=d_{n-1}d_{n-2}^{2},\quad d_{2}=2.
\end{equation*}%
It is seen that $a_{n}\geq d_{n}$ for all $n\geq 2$. Let $d_{n}=\alpha
^{\beta ^{n}}$ we then have $\beta ^{2}-\beta -2=0$, which implies $\beta =2$%
, i.e., $h(\mathcal{B})=\ln 2$. This completes the discussion of Case (4).

In Case (5), we have the following SNRE 
\begin{equation*}
\left\{ 
\begin{array}{l}
a_{n}=a_{n-1}b_{n-1}+b_{n-1}^{2}, \\ 
b_{n}=a_{n-1}b_{n-1}, \\ 
a_{2}=2\text{, }b_{2}=1.%
\end{array}%
\right.
\end{equation*}%
Since $a_{n}\geq b_{n}$ for all $n\geq 2$. Using the same method of the
proof in Lemma \ref{Lma : 2} and Remark \ref{Rmk: 1} we have $h(\mathcal{B}%
)=\ln 2$. Finally, the SNRE of Case (6) is as follows.

\begin{equation*}
\left\{ 
\begin{array}{l}
a_{n}=2a_{n-1}b_{n-1}, \\ 
b_{n}=b_{n-1}^{2}, \\ 
a_{2}=2\text{, }b_{2}=1.%
\end{array}%
\right.
\end{equation*}%
It is evident that $b_{n}=1$ for $n\geq 2$. Thus we have $a_{n}=2a_{n-1}$
for $a_{2}=2$. It is a simple matter to see that $h(\mathcal{B})=0$. The
proof is thus complete.
\end{proof}


\section{Entropy for $d, k\geq 2$}

This section investigates the entropy of tree-shifts of finite type defined
on infinite trees, in which each node has exactly $d$ children for $d\geq 2$%
, with labels in $\mathcal{A}=\{1,\ldots ,k\}$ of cardinality $k\geq 2$. The
corresponding SNRE becomes 
\begin{equation}
\left\{ 
\begin{array}{l}
a_{n}^{(1)}=F^{(1)}(a_{n-1}^{(1)},\ldots ,a_{n-1}^{(k)}), \\ 
\vdots \\ 
a_{n}^{(k)}=F^{(k)}(a_{n-1}^{(1)},\ldots ,a_{n-1}^{(k)}), \\ 
a_{2}^{(i)}=a_{i}=\left\vert B_{2}(X_{i}^{\mathcal{B}})\right\vert \text{
for }i=1,\ldots ,k,%
\end{array}%
\right.  \label{3}
\end{equation}%
where $F^{(i)}$ is a binary combination over $\{a_{n-1}^{(1)},\ldots ,a_{n-1}^{(k)}\}^d$ for $1\leq i\leq k$.


\subsection{Realization theorem}

The last section indicates that, for the case where $d=k=2$, the entropy of
a tree-shift of finite type is either $0$ or $\ln 2$, provided the forbidden
set consists of $2$-blocks. Recall that the class of such tree-shifts of
finite type is also known as Markov tree-shifts \cite{AB-TCS2012}. Roughly
speaking, a binary Markov tree-shift labeled by two symbols is of either the
simplest or the most complicated dynamical behavior. There is no Markov
tree-shift in between. It is then natural to ask if this is the case in
general.

In this section, we demonstrate that tree-shifts are capable of rich
dynamical behavior by showing the entropy set of tree-shifts of finite type
covers a large number of reals which include the multinacci numbers.

\begin{theorem}[Realization Theorem]
\label{Thm : 2} Let $\rho $ be the maximal root of $x^{n}-%
\sum_{i=1}^{n-1}k_{i}x^{i}=0$, where $k_{i}\in \mathbb{N}\cup \{0\}$ for $%
i=1,\ldots ,n-1$. Then there exist $d,k\geq 2$ and $\mathcal{B}$ such that $%
h(\mathcal{B})=\ln \rho $.
\end{theorem}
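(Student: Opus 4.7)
My plan is to construct, for each $\rho$ satisfying $\rho^n = \sum_{i=1}^{n-1} k_i \rho^i$, an explicit Markov tree-shift whose SNRE, after a logarithmic change of variables, reduces to a linear recurrence whose maximal root is $\rho$. The key observation is that if every $F^{(i)}$ were a single monomial of total degree $d$ in the $a_{m-1}^{(j)}$'s, then $L_m^{(i)} := \ln a_m^{(i)}$ would satisfy a \emph{linear} recurrence driven by a nonnegative integer matrix $M$ with row sums equal to $d$, and such a matrix has Perron value exactly $d$, forcing $h(\mathcal{B})=\ln d$. To realize non-integer $\rho$, I would introduce a ``junk'' symbol $0$ whose only allowed $2$-block is $(0;0,\ldots,0)$, ensuring $a_m^{(0)}\equiv 1$; this symbol then serves as a ``degree filler'' that lets us pad low-degree equations up to the common degree $d$ without disturbing the logarithmic recurrence.

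Concretely, take $k=n$, $d=K:=\sum_{i=1}^{n-1} k_i$, and $\mathcal{A}=\{0,1,\ldots,n-1\}$; let $\mathcal{B}$ consist of (i) the block $(0;0,\ldots,0)$, (ii) every permutation of the block with root $1$ whose $d$ children contain $k_{n-j}$ copies of symbol $j$ for $j=1,\ldots,n-1$, and (iii) for each $i=2,\ldots,n-1$, every permutation of $(i; i-1,0,\ldots,0)$. Theorem~\ref{thm:1to1-correspondence-entropy-SNRE} then yields the SNRE
\begin{equation*}
a_m^{(0)} = (a_{m-1}^{(0)})^d,\quad a_m^{(1)} = C_1 \prod_{j=1}^{n-1}(a_{m-1}^{(j)})^{k_{n-j}},\quad a_m^{(i)} = d\, a_{m-1}^{(i-1)} (a_{m-1}^{(0)})^{d-1}\;\; (i\geq 2),
\end{equation*}
with $C_1=\binom{d}{k_{n-1},\ldots,k_1}$. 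Since $a_m^{(0)}\equiv 1$, the $i\geq 2$ relations telescope in logs to $L_m^{(j)}=L_{m-j+1}^{(1)}+(j-1)\ln d$, and substitution into the $L_m^{(1)}$-equation yields the linear nonhomogeneous recurrence
\begin{equation*}
L_m^{(1)} = \sum_{t=1}^{n-1} k_{n-t}\, L_{m-t}^{(1)} + \mathrm{const},
\end{equation*}
whose characteristic polynomial is exactly $(x^n-\sum_i k_i x^i)/x$ and hence has maximal root $\rho$. Therefore $L_m^{(1)}=C\rho^m+O(1)$, and the squeeze $L_m^{(1)}\leq \ln c_m\leq L_m^{(1)}+\ln n$ delivers $h(\mathcal{B})=\lim_m \ln^2 c_m/m=\ln \rho$.

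The genuinely delicate step is showing the leading coefficient $C$ is strictly positive, since otherwise $L_m^{(1)}$ would be governed by a subdominant root and the resulting entropy would fall short of $\ln \rho$. Insisting on \emph{all} permutations in (ii) and (iii) pushes $a_2^{(i)}\geq 2$ (hence $L_2^{(i)}>0$) for each $i\geq 1$ in the non-degenerate cases; a Perron--Frobenius argument---decomposing the positive initial vector in the eigenbasis of the companion matrix of the recurrence and using that the $\rho$-eigenvector is strictly positive---then forces $C>0$. The residual edge cases, most notably when at most one $k_i$ is nonzero so that $\rho=k_{i_0}^{1/(n-i_0)}$ and one of the initial values collapses to $1$, are handled by variant constructions: for $\rho=d^{1/r}$ one chains $r$ auxiliary symbols via blocks $(i;i+1,0,\ldots,0)$ for $i<r$ and $(r;1,1,\ldots,1)$, so that $L_m^{(1)}=d\,L_{m-r}^{(1)}+\mathrm{const}$ and the same logarithmic analysis gives entropy $(\ln d)/r$.
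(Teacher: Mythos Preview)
Your construction is correct and rests on the same core idea as the paper's proof: introduce a ``junk'' symbol that pads every monomial up to the common degree $d$, insert auxiliary ``delay'' symbols so that the main recurrence depends on several past values, and then take logarithms to obtain a linear recurrence whose dominant root is $\rho$.  The implementations differ in two ways.  First, the paper builds a \emph{separate} delay chain of length $q_j=p-p_j$ for each nonzero term of the polynomial (yielding $2+\sum_j q_j$ symbols in total), whereas you use a single chain $1\to 2\to\cdots\to n-1$ and hence only $n$ symbols; your version is therefore more economical in $k$.  Second, the paper hard-wires a factor of~$2$ into the main equation by selecting exactly two ordered children-tuples for the root symbol, while you take \emph{all} permutations and pick up the multinomial coefficient $C_1$ together with the factors of $d$ from the chain; both devices serve the same purpose of producing a strictly positive additive constant in the log-recurrence so that the solution genuinely grows like $\rho^m$.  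Your explicit Perron--Frobenius discussion of why the $\rho$-coefficient $C$ is positive, and your isolation of the single-term edge case, in fact go beyond the paper's argument, which simply posits $x_n=\alpha^n$ and reads off $\alpha$ without justifying the asymptotic.

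Two small caveats.  Your squeeze $\ln c_m\le L_m^{(1)}+\ln n$ tacitly assumes $a_m^{(1)}=\max_i a_m^{(i)}$, which need not hold for small $m$; it is enough to observe that every $L_m^{(i)}$ equals $L_{m'}^{(1)}+O(1)$ for some $m'\le m$, so $\ln c_m\le \max_{m'\le m}L_{m'}^{(1)}+O(1)$ and the limit is unaffected.  Also, your edge-case variant for $\rho=d^{1/r}$ degenerates when $r=1$ (the block $(1;1,\ldots,1)$ alone gives $a_m^{(1)}\equiv 1$); but then $\rho=d$ is an integer and is realized trivially by the full $d$-ary tree-shift on two symbols.
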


\begin{proof}
Assume that $\rho $ is the maximal root of 
\begin{equation}
F(x)=x^{p}-k_{1}x^{p_{1}}-k_{2}x^{p_{2}}-\cdots
-k_{l-1}x^{p_{l-1}}-k_{l}x^{p_{l}},\text{ }k_{i}\in \mathbb{N},\text{ }%
\forall 1\leq i\leq l,  \label{4}
\end{equation}%
where $p_{l}=0$. Let $q_{i}=p-p_{i}$ for $i=1,\ldots ,l$. Thus $\rho $
satisfies the equation 
\begin{equation}
1=\sum_{j=1}^{l}k_{j}x^{-q_{j}}.  \label{14}
\end{equation}%
The proof is divided into two parts.

\textbf{1.} $p_{1}<p-1$. Introduce the symbol set $\bigcup_{j=1}^{l}%
\bigcup_{i=0}^{q_{j}-1}\{a^{(j,i)}\}\bigcup \{a^{(0)},b\}$ and denote $%
a^{(j)}=a^{(j,0)}$ for all $1\leq j\leq l$. Define $d=\sum_{i=1}^{l}k_{i}$
and $k=2+\sum_{j=1}^{l}q_{j}$, we construct the SNRE according to $d$ and $k$
as follows. 
\begin{equation}
\left\{ 
\begin{array}{l}
a_{n}^{\left( 0\right) }=2\left( a_{n-1}^{\left( 1\right) }\right)
^{k_{1}}\left( a_{n-1}^{\left( 2\right) }\right) ^{k_{2}}\left(
a_{n-1}^{\left( 3\right) }\right) ^{k_{3}}\cdots \left( a_{n-1}^{\left(
l\right) }\right) ^{k_{l}}, \\ 
a_{n}^{\left( j,r\right) }=a_{n-1}^{\left( j,r+1\right) }b_{n-1}^{d-1}\text{%
, for all }1\leq j\leq l\text{, }0\leq r\leq q_{j}-2, \\ 
a_{n}^{\left( j,q_{j}-1\right) }=a_{n-1}^{(0)}b_{n-1}^{d-1}, \\ 
b_{n}=b_{n-1}^{d}, \\ 
a_{2}^{(0)}=2,\text{ }b_{2}=a_{2}^{\left( j,r\right) }=1,\text{ }\forall
1\leq j\leq l\text{ and }0\leq r\leq q_{j}-1.%
\end{array}%
\right.  \label{6}
\end{equation}%
We claim that $h(\mathcal{B})=\ln \rho $. Since for $1\leq j\leq l$ and $%
1\leq r\leq q_{j}-2$, 
\begin{eqnarray}
a_{n}^{(j)}
&=&a_{n}^{(j,0)}=a_{n-1}^{(j,1)}b_{n-1}^{d-1}=a_{n-2}^{(j,2)}b_{n-2}^{d-1}b_{n-1}^{d-1}
\notag \\
&=&\cdots =a_{n-r}^{(j,r)}\left( b_{n-r}\cdots b_{n-1}\right) ^{d-1}  \notag
\\
&=&a_{n-(q_{j}-1)}^{(j,q_{j}-1)}\left( b_{n-(q_{j}-1)}\cdots b_{n-1}\right)
^{d-1}\text{.}  \label{10}
\end{eqnarray}%
Combining (\ref{10}) with the fact that $a_{n}^{\left( j,q_{j}-1\right)
}=a_{n-1}^{(0)}b_{n-1}^{d-1}$ we obtain 
\begin{equation}
a_{n}^{(j)}=a_{n-q_{j}}^{(0)}\left( b_{n-q_{j}}\cdots b_{n-1}\right) ^{d-1}%
\text{ for all }1\leq j\leq l\text{.}  \label{11}
\end{equation}%
Substituting (\ref{11}) in the first equation of (\ref{6}) yields 
\begin{equation}
a_{n}^{(0)}=2\prod_{j=1}^{l}\left( a_{n-q_{j}}^{(0)}\right) ^{k_{j}}\left(
b_{n-q_{j}}\cdots b_{n-1}\right) ^{k_{j}(d-1)}\text{.}  \label{12}
\end{equation}%
One can easily check that $b_{n}=1$ for all $n\geq 1$, thus we can rewrite (%
\ref{12}) as follows. 
\begin{equation*}
a_{n}^{(0)}=2\prod_{j=1}^{l}\left( a_{n-q_{j}}^{(0)}\right) ^{k_{j}}\text{
with }a_{2}^{(0)}=2\text{.}
\end{equation*}%
Taking $x_{n}=\ln a_{n}^{(0)}$ becomes 
\begin{equation*}
x_{n}=\ln 2+\sum\limits_{j=1}^{l}k_{j}x_{n-q_{j}}.
\end{equation*}%
Let $x_{n}=\alpha ^{n}$. We have the following equation 
\begin{equation}
\alpha ^{n}=\ln 2+\sum_{j=1}^{l}k_{j}\alpha ^{n-q_{j}}.  \label{8}
\end{equation}%
Since $\alpha >1$, dividing \eqref{8} by $\alpha ^{n}$ and letting $n$
approach to infinity infers 
\begin{equation*}
1=\sum_{j=1}^{l}k_{j}\alpha ^{-q_{j}}\text{.}
\end{equation*}%
That is, $\alpha =h(\mathcal{B})$ satisfies (\ref{14}).

\textbf{2.} $p_{1}=p-1$. The same proof works when we modify the first
equation of (\ref{6}) as follows.%
\begin{equation*}
a_{n}^{\left( 0\right) }=2\left( a_{n-1}^{\left( 0\right) }\right)
^{k_{1}}\left( a_{n-1}^{\left( 1\right) }\right) ^{k_{2}}\left(
a_{n-1}^{\left( 2\right) }\right) ^{k_{3}}\cdots \left( a_{n-1}^{\left(
l\right) }\right) ^{k_{l}}\text{.}
\end{equation*}%
This completes the proof.
\end{proof}

It is worth pointing out that the \emph{multinacci number} of order $n\in 
\mathbb{N}\backslash \{1\}$ is the real number $\gamma _{n}\in (1,2)$ which
is the unique positive real solution of the equation $1=x^{-1}+x^{-2}+\cdots
+x^{-n}$. It follows immediately from Theorem \ref{Thm : 2} that the
multinacci numbers of all order can be realizaed by tree-shifts of finite
type. The smallest multinacci number is the golden ratio $(1+\sqrt{5})/2$.
The following example presents a tree-shift of finite type which realizes
such number. \emph{\ }

\begin{example}
Let $g=(1+\sqrt{5})/2$ be the maximal root of the polynomial 
\begin{equation}
x^{2}-x-1=0.  \label{7}
\end{equation}%
In this case we have $p_{1}=p-1$ (defined in the proof of Theorem \ref{Thm :
2}). Construct the SNRE as follows. 
\begin{equation*}
\left\{ 
\begin{array}{l}
a_{n}^{(0)}=2a_{n-1}^{(0)}a_{n-1}^{(1)}, \\ 
a_{n}^{(1)}=a_{n-1}^{(0)}b_{n-1}, \\ 
b_{n}=b_{n-1}^{2}, \\ 
a_{2}^{(1)}=2,\text{ }b_{2}=a_{2}^{(2)}=1.%
\end{array}%
\right.
\end{equation*}%
Theorem \ref{Thm : 2} shows that $h(\mathcal{B})=\ln g$. In this case, the
desired pair of $d$ and $k$ are $d=2$ and $k=3$.
\end{example}


\subsection{Computation of entropy for general $d$ and $k$}

For SNRE (\ref{3}) and $1\leq i\leq k$, we associate an indicator vector $%
v^{(i)}:=v_{F^{(i)}}$ as defined earlier. If there exists an $l$ such that $%
v^{(l)}\geq v^{(j)}$ for all $1\leq j\leq k$, then we call $v^{(l)}$ a \emph{%
dominant vector}. We say that SNRE (\ref{3}) is of the \emph{dominant-type}
if there exists a dominant vector $v\in \{v^{(i)}\}_{i=1}^{k}$. 

\begin{example}
\label{Ex: 1}Let $d=2$ and $k=3$. Given $\mathcal{B}$ and suppose the
corresponding SNRE is of the form. 
\begin{equation}
\left\{ 
\begin{array}{l}
a_{n}^{(1)}=\left( a_{n-1}^{(1)}\right)
^{2}+a_{n-1}^{(1)}a_{n-1}^{(2)}+a_{n-1}^{(1)}a_{n-1}^{(3)}=F^{(1)}, \\ 
a_{n}^{(2)}=a_{n-1}^{(1)}a_{n-1}^{(2)}+a_{n-1}^{(1)}a_{n-1}^{(3)}=F^{(2)} \\ 
a_{n}^{(3)}=a_{n-1}^{(1)}a_{n-1}^{(3)}=F^{(3)} \\ 
a_{2}^{(1)}=3,\text{ }a_{2}^{(2)}=2\text{ and }a_{2}^{(3)}=1\text{. }%
\end{array}%
\right.  \label{9}
\end{equation}%
Then one can easily check that $v^{(1)}=(1,1,1,0,0,0,0,0,0)$ is a dominant
vector and the SNRE (\ref{9}) is of the dominant-type.
\end{example}

\begin{proposition}
\label{prop: lower-bound-entropy} Given $\mathcal{B}$, if the corresponding
SNRE (\ref{3}) is of the dominant-type with $v^{(l)}$ being a dominant
vector. Then $h(\mathcal{B})=\ln h$ or $0$, where $h$ is the maximal degree
of $a_{n-1}^{(l)}$ in $F^{(l)}$. Furthermore, if $a_{n-1}^{(l)}$ has degree $%
d$ in $F^{(l)}$, then $h(\mathcal{B})=\ln d$.
\end{proposition}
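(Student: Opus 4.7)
The plan is to exploit the dominance of $v^{(l)}$ in order to reduce the entropy computation to an analysis of the single sequence $a_n^{(l)}$. Because $v^{(l)} \geq v^{(j)}$ for every $j$, the binary combination $F^{(l)}$ dominates each $F^{(j)}$ pointwise on nonnegative inputs, so a straightforward induction on $n$ yields $a_n^{(l)} \geq a_n^{(j)}$ for all $j$ and all $n \geq 2$. Consequently
\begin{equation*}
a_n^{(l)} \;\leq\; |B_n(X^{\mathcal{B}})| \;=\; \sum_{i=1}^{k} a_n^{(i)} \;\leq\; k\, a_n^{(l)},
\end{equation*}
so $h(\mathcal{B}) = \lim_{n\to\infty} \ln^2 a_n^{(l)}/n$ and the task reduces to pinning down the doubly exponential growth rate of $a_n^{(l)}$.

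For the lower bound when $h \geq 2$, I would single out the monomial of $F^{(l)}$ in which $a_{n-1}^{(l)}$ appears with its maximal power $h$, writing it as $(a_{n-1}^{(l)})^h \prod_{j \neq l}(a_{n-1}^{(j)})^{e_j}$ with $\sum_{j \neq l} e_j = d - h$. Since each $a_{n-1}^{(j)} \geq 1$, this single term already yields $a_n^{(l)} \geq (a_{n-1}^{(l)})^h$; iterating from any $n_0$ with $a_{n_0}^{(l)} > 1$ gives $\ln a_n^{(l)} \geq h^{n-n_0} \ln a_{n_0}^{(l)}$, hence $h(\mathcal{B}) \geq \ln h$. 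For the upper bound in the ``furthermore'' case $h = d$, the dominance $a_{n-1}^{(j)} \leq a_{n-1}^{(l)}$ forces every monomial in $F^{(l)}$ to be bounded by $(a_{n-1}^{(l)})^d$, so if $N$ counts the monomials then $a_n^{(l)} \leq N (a_{n-1}^{(l)})^d$; iterating and applying $\ln^2$ gives $h(\mathcal{B}) \leq \ln d$, which matches the lower bound and closes the ``furthermore'' statement.

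The remaining cases and the main obstacle live in the regime $h < d$. When $h \leq 1$, every monomial of $F^{(l)}$ is at most linear in $a_{n-1}^{(l)}$, so $a_n^{(l)}$ is controlled by products involving only subdominant sequences; unwinding the recursion produces a singly exponential upper bound, which kills $\ln^2 a_n^{(l)}/n$ and forces $h(\mathcal{B}) = 0$. The principal technical difficulty is the band $2 \leq h < d$: the crude monomial bound above only delivers $\ln d$, whereas the proposition claims $\ln h$. I expect that closing this gap will require coupled upper bounds of the form $a_n^{(j)} \leq C_j \bigl(a_n^{(l)}\bigr)^{\lambda_j}$, with $\lambda_j < 1$ for each strictly sub-dominant index $j$, then re-substituted into every monomial of $F^{(l)}$ so that a term missing $d - h$ copies of $a_{n-1}^{(l)}$ is suppressed precisely enough to trade those missing degrees for slower growth and produce the effective exponent $h$. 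This bookkeeping, rather than any of the individual ingredients above, I expect to be the heart of the full proof.
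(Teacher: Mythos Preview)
Your reduction of $h(\mathcal{B})$ to the single sequence $a_n^{(l)}$ via the sandwich $a_n^{(l)} \leq |B_n(X^{\mathcal{B}})| \leq k\,a_n^{(l)}$, your lower bound $h(\mathcal{B}) \geq \ln h$ from isolating a monomial containing $(a_{n-1}^{(l)})^h$, and your upper bound in the ``furthermore'' case $h=d$ are exactly the paper's argument. The paper cites the method of Lemma~\ref{Lma : 2} for the lower bound and of Lemma~\ref{Prop : 1} for the upper bound, and writes essentially nothing more.

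The obstacle you isolate in the band $2 \leq h < d$ is real, and the paper does \emph{not} resolve it: its proof simply asserts $\lim_n \ln^2 a_n^{(l)}/n = \ln h$ ``by the same argument as Lemma~\ref{Prop : 1},'' with no discussion of why the $d-h$ subdominant factors in each monomial cannot lift the effective exponent above $h$. Your proposed repair through inequalities $a_n^{(j)} \leq C_j\bigl(a_n^{(l)}\bigr)^{\lambda_j}$ with $\lambda_j < 1$ will not succeed, because dominance only guarantees $a_n^{(j)} \leq a_n^{(l)}$ and the ratio $a_n^{(j)}/a_n^{(l)}$ can remain bounded away from zero. Concretely, for $d=3$, $k=2$ with $F^{(1)} = (a^{(1)})^2 a^{(2)} + a^{(1)}(a^{(2)})^2$ and $F^{(2)} = (a^{(1)})^2 a^{(2)}$ one has $h=2$, yet $a_n^{(1)} \leq 2\,a_n^{(2)}$ for all $n$ and both $\ln a_n^{(i)}$ grow like $3^n$, giving $h(\mathcal{B})=\ln 3 \neq \ln 2$. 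The same phenomenon already appears in the paper's own $d=k=2$ analysis: Remark~\ref{Rmk: 1} records dominant pairs such as $v_F=(0,1,1,0)$, $v_G=(0,1,0,0)$, where the maximal degree of $a_{n-1}$ in $F$ is $1$ but $h(\mathcal{B})=\ln 2$. So the difficulty you flagged is not mere bookkeeping --- the first clause of the proposition is not established (and in fact fails) for $h<d$; only the ``furthermore'' clause, which you have proved completely, is fully supported by the argument.
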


\begin{proof}
There is no loss of generality in assuming $l=1$ and suppose the maximal
degree of $a_{n-1}^{(1)}$ in $F^{(1)}$ is $h$. Following the same method as
the proof of Lemma \ref{Lma : 2} we have $h(\mathcal{B})\geq \ln h$. Thus it
suffices to show that $h(\mathcal{B})\leq \ln h$. Since $v^{(1)}$ is
dominant, we have $a_{n}^{(1)}\geq a_{n}^{(i)}$ for $i=2,\ldots ,k$ and $%
n\geq 2$. Thus $\sum_{i=1}^{k}a_{n}^{(i)}\leq ka_{n}^{(1)}$ and 
\begin{equation*}
h(\mathcal{B})=\lim_{n\rightarrow \infty }\frac{\ln
^{2}\sum_{i=1}^{k}a_{n}^{(i)}}{n}\leq \lim_{n\rightarrow \infty }\frac{\ln
^{2}a_{n}^{(1)}}{n}=\ln h\text{.}
\end{equation*}%
The last equality follows from the same argument to the proof of Lemma %
\ref{Prop : 1}, and the proof is complete.
\end{proof}

It is remarkable that Corollary \ref{Cor : 2} is a special case of
Proposition \ref{prop: lower-bound-entropy} and it is a simple matter to see
that $h(\mathcal{B})=\ln 2$ in Example \ref{Ex: 1}.

%
%

In the following, we introduce another methodology for computing entropy,
which shows that the symmetry of basic sets implies full entropy. Suppose $%
\mathcal{B}$ is a basic set consisting of two-blocks. In this case, each $u\in 
\mathcal{B}$ can be written as $u=(u_{\epsilon },u_{0},u_{1},\ldots
,u_{d-1}) $. Define a \emph{projection map} $\pi $ as 
\begin{equation*}
\pi (u)=(u_{0},u_{1},\ldots ,u_{d-1})\quad \text{for all}\quad u\in \mathcal{%
B}.
\end{equation*}%
For instance, if $u=(1,2,2,1,2,1)$ is a two-block, then $\pi (u)=(2,2,1,2,1)$%
.

\begin{definition}[Symmetric basic sets]
Given a basic set $\mathcal{B}$, we decompose it into $\mathcal{B}%
=\bigcup_{i=1}^{k}\mathcal{B}^{(i)}$, where 
\begin{equation*}
\mathcal{B}^{(i)}=\{\omega :\omega _{\epsilon }\text{ is labeled by }i\}%
\text{ for }1\leq i\leq k\text{.}
\end{equation*}%
Here we say the corresponding tree-shift $X^{\mathcal{B}}$ is \emph{symmetric%
} if $\pi (\mathcal{B}^{(i)})=\pi (\mathcal{B}^{(j)})$ for all $i\neq j$. In
this case we call the corresponding SNRE (\ref{3}) is of the \emph{%
symmetric-type}.
\end{definition}

Suppose $v^{(i)}$ is an indicator vector, we denote by $n^{(i)}=\left\vert
\left\{ j:\left( v^{(i)}\right) _{j}\neq 0\right\} \right\vert $ the number
of the non-zero coordinates of $v^{(i)}$. The following result shows that
the entropy of a symmetric tree-shift is $\ln d$.

\begin{proposition}
\label{prop:entropy-symmetric-type} If an SNRE is of symmetric-type with $%
n^{(1)}\geq 2$. Then $h(\mathcal{B})=\ln d$.
\end{proposition}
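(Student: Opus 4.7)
The plan is to use the symmetry hypothesis to collapse the vector SNRE \eqref{3} into a single scalar nonlinear recurrence, and then extract the entropy from its closed form. First, I would observe that $\pi(\mathcal{B}^{(i)}) = \pi(\mathcal{B}^{(j)})$ for all $i \neq j$ says precisely that the multiset of allowed children $d$-tuples does not depend on the root symbol. Consequently the indicator vectors $v^{(1)}, \ldots, v^{(k)}$ are all equal, the binary combinations $F^{(1)}, \ldots, F^{(k)}$ are identical as polynomials in $a_{n-1}^{(1)}, \ldots, a_{n-1}^{(k)}$ (call this common polynomial $F$), and $n^{(i)} = n^{(1)}$ for every $i$. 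Moreover $|B_{2}(X_{i}^{\mathcal{B}})| = |\mathcal{B}^{(i)}| = |\pi(\mathcal{B}^{(i)})| = n^{(1)}$, so the initial data $a_{2}^{(i)}$ also do not depend on $i$.

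Second, I would prove by induction on $n \geq 2$ that $a_{n}^{(1)} = a_{n}^{(2)} = \cdots = a_{n}^{(k)}$, calling the common value $c_n$. The SNRE then reduces to $c_n = F(c_{n-1}, c_{n-1}, \ldots, c_{n-1})$ with $c_2 = n^{(1)}$. Every monomial of $F$ is a product of exactly $d$ variables (one per child) and $F$ has exactly $n^{(1)}$ monomials, so evaluation on the diagonal yields
\begin{equation*}
c_n \;=\; n^{(1)} \, c_{n-1}^{\,d}, \qquad n \geq 3, \quad c_2 = n^{(1)} \geq 2.
\end{equation*}

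Third, setting $L_n = \ln c_n$ converts this to the first-order linear recurrence $L_n = d\, L_{n-1} + \ln n^{(1)}$, whose solution for $d \geq 2$ is
\begin{equation*}
L_n \;=\; \Bigl(L_2 + \tfrac{\ln n^{(1)}}{d-1}\Bigr) d^{\,n-2} \;-\; \tfrac{\ln n^{(1)}}{d-1}.
\end{equation*}
Because $n^{(1)} \geq 2$, the coefficient of $d^{n-2}$ is strictly positive, so $L_n \sim A \cdot d^n$ with some $A > 0$. Taking one more logarithm yields $\ln L_n = n \ln d + \ln A + o(1)$, i.e., $\ln^2 c_n / n \to \ln d$. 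Since $|B_n(X^{\mathcal{B}})| = \sum_{i=1}^{k} a_n^{(i)} = k\, c_n$, the same limit gives $h(\mathcal{B}) = \ln d$.

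The genuinely non-routine step is the first one: converting the combinatorial identity $\pi(\mathcal{B}^{(i)}) = \pi(\mathcal{B}^{(j)})$ into the algebraic identity $F^{(i)} = F^{(j)}$ between binary combinations, and then propagating equal initial data through the induction to force $a_n^{(i)}$ to be independent of $i$. Once that algebraic collapse is in place, the rest is a scalar Aho--Sloane style asymptotic analogous to Lemma \ref{Prop : 1}, and the hypothesis $n^{(1)} \geq 2$ is used exactly to ensure the constant $A$ in $L_n \sim A d^n$ is positive.
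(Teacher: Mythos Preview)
Your proposal is correct and follows essentially the same approach as the paper: both use the symmetry hypothesis to force $a_n^{(i)}=a_n^{(j)}$ for all $i,j$ and $n\geq 2$, collapse the SNRE to the scalar recurrence $a_n^{(1)}=n^{(1)}\bigl(a_{n-1}^{(1)}\bigr)^{d}$ with $n^{(1)}\geq 2$, and read off $h(\mathcal{B})=\ln d$. Your version is simply more explicit about the initial data and the asymptotic computation, which the paper leaves to the reader.
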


\begin{proof}
Symmetric-type property implies $a_{n}^{(i)}=a_{n}^{(j)}$ and $%
n^{(i)}=n^{(j)}$ for all $1\leq i,j\leq k$ and $n\geq 2$. Then the
corresponding SNRE (\ref{3}) can be reduced to a single equation $%
a_{n}^{(1)}=n^{(1)}\left( a_{n-1}^{(1)}\right) ^{d}$ with $n^{(1)}\geq 2$.
Hence we have $h(\mathcal{B})=\ln d$, which proves the theorem.
\end{proof}

\begin{example}
Suppose a Markov tree-shift $X^{\mathcal{B}}$ is generated by the basic set $%
\mathcal{B}=\{(i,1,1,2),(i,1,2,1),(i,2,1,1),(i,2,1,2),(i,2,2,2):i=1,2\}$.
Then the indicator vectors of the corresponding SNRE are 
\begin{equation*}
v^{(1)}=(0,1,1,0,1,1,0,1)=v^{(2)}.
\end{equation*}%
Hence the SNRE is of the symmetric-type and $n^{(1)}=5=n^{(2)}$. Proposition %
\ref{prop:entropy-symmetric-type} indicates that the entropy of $X^{\mathcal{%
B}}$ is $h(\mathcal{B})=\ln 3$.
\end{example}


\section{Entropy with Boundary Conditions}

Suppose $X$ is a tree-shift. It is natural to ask whether the topological
entropy of $X$ is influenced by the constraint of boundary conditions, and
under what conditions the topological entropy of $X$ with boundary
conditions is identical to the topological entropy of the original
tree-shift $X$.

In this section, we consider three different types of boundary conditions: 
\emph{Periodic}, \emph{Dirichlet}, and \emph{Neumann boundary conditions}.
For a one-sided shift space $X$ with finite lattice $n \in \mathbb{N}$, we
mean that $x = x_1 x_2 \ldots x_n$ for each $x \in X$. Periodic boundary
condition infers that the underlying lattice of $X$ is treated as a circle;
that is, $x_n = x_1$ for all $x \in X$. Dirichlet boundary condition
indicates that the terminal state is constant for each $x \in X$. More
precisely, $x_n = \kappa$ for all $x \in X$, where $\kappa$ is given.
Neumann boundary condition means there is zero flux in the system; namely, $%
x_n = x_{n-1}$ for all $x \in X$.

For the case where $X$ is a tree-shift and $n \geq 2$, the collection of $n$%
-blocks in $X$ with the constraint of periodic, Dirichlet, and Neumann
boundary conditions, denoted by $B_n^P(X), B_n^{D_i}(X)$, and $B_n^{N}(X)$,
respectively, is defined as 
\begin{align}
B_n^P &= \{u \in B_n(X): u_w = u_{\epsilon} \text{ for } |w| = n-1\}, \\
B_n^{D_i} &= \{u \in B_n(X): u_w = i \text{ for } |w| = n-1\}, \quad 1 \leq
i \leq k, \\
B_n^N &= \{u \in B_n(X): u_w = u_{\hat{w}} \text{ for } |w| = n-1, \hat{w} =
w_1 \cdots w_{n-2}\},
\end{align}
respectively. Based on the definitions of $B_n^P, B_n^{D_i}$, and $B_n^N$,
the topological entropy of tree-shift $X$ with boundary condition is defined
as 
\begin{equation*}
h^{\iota} (\mathcal{B}) = h^{\iota} (X^{\mathcal{B}}) = \lim_{n \to \infty} 
\frac{\ln^2 |B_n^{\iota}(X^{\mathcal{B}})|}{n}
\end{equation*}
provided the limit exists, herein $\iota = P, D_1, \ldots, D_k, N$.

In the rest of this section, $X = X^{\mathcal{B}}$ is a binary Markov
tree-shift over $\mathcal{A} = \{1, 2\}$ unless otherwise stated. Namely, we
focus on the case where $d = k = 2$.

Obviously, $h^{\iota} (\mathcal{B}) \leq h(\mathcal{B})$ for $\iota = P,
D_1, D_2, N$. Hence, the effect of boundary conditions is seen only in those
tree-shifts carrying positive topological entropy.

\begin{theorem}
\label{thm:Entropy-NBC} Suppose $X^{\mathcal{B}}$ is a Markov tree-shift and 
$h(\mathcal{B}) > 0$. Then $h^N(\mathcal{B}) = h(\mathcal{B})$ if and only
if either (1) $\{(1, 1, 1), (2, 2, 2)\} \subseteq \mathcal{B}$ or (2) $%
\mathcal{B}$ contains $\{(1, i, i), (2, i, i)\}$ for some $i = 1, 2$.
\end{theorem}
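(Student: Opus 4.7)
The plan is a case analysis driven by an elementary observation: for any $n$-block $u \in B_n^N(X^{\mathcal{B}})$, each depth-$(n-2)$ node carrying symbol $j$ must sit under the $2$-block $(j,j,j)$, because the Neumann condition forces both of its children at depth $n-1$ to equal $j$. Thus the set $I := \{j \in \{1,2\} : (j,j,j) \in \mathcal{B}\}$ controls which symbols are admissible at the penultimate level of any Neumann $n$-block. Combined with Theorem \ref{Thm: 3}, which gives $h(\mathcal{B}) \in \{0, \ln 2\}$, the hypothesis $h(\mathcal{B}) > 0$ reduces the problem to producing doubly-exponentially many elements of $B_n^N(X^{\mathcal{B}})$.

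For sufficiency under condition (1) we have $I = \{1,2\}$: any $v \in B_{n-1}(X^{\mathcal{B}})$ extends to a Neumann $n$-block by appending $(j,j,j)$ at each leaf $j$, giving $|B_n^N(X^{\mathcal{B}})| \geq |B_{n-1}(X^{\mathcal{B}})|$. Under condition (2) with the prescribed symbol $i$, any $v \in B_{n-2}(X^{\mathcal{B}})$ first extends by appending $(p,i,i) \in \mathcal{B}$ at each leaf $p$, producing an $(n-1)$-block whose leaves are uniformly $i$; then appending $(i,i,i)$ at each such leaf completes the Neumann $n$-block, the needed $(i,i,i)$ being $(1,1,1) = (1,i,i)$ or $(2,2,2) = (2,i,i)$ and hence in $\mathcal{B}$. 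This yields $|B_n^N(X^{\mathcal{B}})| \geq |B_{n-2}(X^{\mathcal{B}})|$. In either case Theorem \ref{Thm: 3} together with the trivial upper bound $h^N \leq h$ forces $h^N(\mathcal{B}) = \ln 2$.

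For necessity, assume $h^N(\mathcal{B}) = \ln 2$. If $I = \emptyset$ then $B_n^N(X^{\mathcal{B}}) = \emptyset$ for $n \geq 2$, contradicting $h^N > 0$. If $|I| = 2$ condition (1) holds. The remaining case is $|I| = 1$, and by symbol symmetry we may take $I = \{1\}$, so every Neumann $n$-block has all depth-$(n-2)$ symbols equal to $1$. Set $J := \{p : (p,1,1) \in \mathcal{B}\}$; then $1 \in J$ since $(1,1,1) \in \mathcal{B}$. If $J = \{1,2\}$, then $\{(1,1,1),(2,1,1)\} \subseteq \mathcal{B}$, which is precisely condition (2) with $i = 1$. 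Otherwise $J = \{1\}$, and an upward induction on depth pins every coordinate of every Neumann $n$-block to $1$, forcing $|B_n^N(X^{\mathcal{B}})| \leq 1$ and $h^N = 0$, a contradiction.

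I expect this upward induction in the last sub-case to be the only delicate step: once every entry at depth $m$ is known to be $1$, each parent at depth $m-1$ has both children equal to $1$ and therefore must use a $2$-block in $J = \{1\}$, hence is itself $1$; iterating to the root pins the whole block. Everything else is bookkeeping supported by the dichotomy of Theorem \ref{Thm: 3}, so no growth-rate machinery beyond the trivial inequalities $|B_n^N| \geq |B_{n-1}|$ or $|B_n^N| \geq |B_{n-2}|$ is required.
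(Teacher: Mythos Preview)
Your proof is correct and follows essentially the same approach as the paper's: both arguments hinge on the observation that the Neumann condition forces depth-$(n-2)$ nodes labeled $j$ to sit under $(j,j,j)$, then split on which of $(1,1,1),(2,2,2),(2,1,1),(1,2,2)$ lie in $\mathcal{B}$, bounding $|B_n^N|$ below by $|B_{n-1}|$ or $|B_{n-2}|$ for sufficiency and forcing $|B_n^N|\leq 1$ for necessity. Your write-up is in fact more careful than the paper's in one place: the paper compresses the necessity sub-case $(1,1,1)\in\mathcal{B}$, $(2,2,2),(2,1,1)\in\mathcal{F}$ into the opaque line $|B_n^N(X^{\mathcal{B}})| = |B_{n-1}(X^{\mathcal{B}}_1)| = 1$, whereas your upward induction (all depth-$m$ entries equal $1$ forces all depth-$(m-1)$ entries equal $1$ via $J=\{1\}$) makes this step transparent; also note that your appeal to Theorem~\ref{Thm: 3} is harmless but unnecessary, since the inequalities $|B_n^N|\geq |B_{n-1}|$ or $|B_n^N|\geq |B_{n-2}|$ already give $h^N\geq h$ directly.
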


\begin{proof}
We start the proof with the ``Only If'' part. It is easily seen that $\{(1,
1, 1), (2, 2, 2)\} \subseteq \mathcal{F}$ asserts that $h^N(\mathcal{B}) = 0
< h(\mathcal{B})$. Suppose, without loss of generality, $(1, 1, 1) \in 
\mathcal{B}$ and $(2, 2, 2) \in \mathcal{F}$. If $(2, 1, 1) \in \mathcal{F}$%
, then 
\begin{equation*}
|B_n^N(X^{\mathcal{B}})| = |B_{n-1}(X^{\mathcal{B}}_1)| = 1 \quad \text{for}
\quad n \geq 2.
\end{equation*}
This contradicts to $h^N(\mathcal{B}) = h(\mathcal{B})$. Hence, $\{(1, 1,
1), (2, 1, 1)\}$ is a subset of $\mathcal{B}$.

For the ``If'' part, observe that $|B_n^N(X^{\mathcal{B}})| = |B_{n-1}(X^{%
\mathcal{B}})|$ follows from the presumption $\{(1, 1, 1), (2, 2, 2)\}
\subseteq \mathcal{B}$, and thus $h^N(\mathcal{B}) = h(\mathcal{B})$.
Without loss of generality, assume that $\{(1, 1, 1), (2, 1, 1)\} \subseteq 
\mathcal{B}$. It comes immediately that 
\begin{equation*}
|B_n^N(X^{\mathcal{B}})| \geq |B_{n-1}(X^{\mathcal{B}}_1)| = |B_{n-2}(X^{%
\mathcal{B}})|,
\end{equation*}
which leads to the desired conclusion.

This completes the proof.
\end{proof}

\begin{theorem}
\label{thm:Entropy-DBC} Suppose $X^{\mathcal{B}}$ is a Markov tree-shift and 
$h(\mathcal{B}) > 0$. Then, for $i = 1, 2$, $h^{D_i}(\mathcal{B}) = h(%
\mathcal{B})$ if and only if either (1) $\{(1, i, i), (2, i, i)\} \subseteq 
\mathcal{B}$ or (2) $\{(\bar{i}, i, i), (1, \bar{i}, \bar{i}), (2, \bar{i}, 
\bar{i})\} \subseteq \mathcal{B}$, where $i + \bar{i} = 3$.
\end{theorem}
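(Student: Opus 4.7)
The plan is to follow the same template as the proof of Theorem~\ref{thm:Entropy-NBC}. First, set $a_n^{D_i} = |B_n^{D_i}(X_1^{\mathcal{B}})|$ and $b_n^{D_i} = |B_n^{D_i}(X_2^{\mathcal{B}})|$, with initial conditions $a_1^{D_i} = \delta_{1,i}$ and $b_1^{D_i} = \delta_{2,i}$; these satisfy the same recursion as in Theorem~\ref{Thm : 1}, and $|B_n^{D_i}(X^{\mathcal{B}})| = a_n^{D_i} + b_n^{D_i}$. By Theorem~\ref{Thm: 3} the hypothesis $h(\mathcal{B}) > 0$ forces $h(\mathcal{B}) = \ln 2$, and $h^{D_i}(\mathcal{B}) \leq h(\mathcal{B})$ is automatic, so it suffices to show $h^{D_i}(\mathcal{B}) \geq \ln 2$ under (1) or (2), and $h^{D_i}(\mathcal{B}) = 0$ otherwise.

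For the ``If'' direction I would exhibit explicit injections $B_n(X^{\mathcal{B}}) \hookrightarrow B_{n+c}^{D_i}(X^{\mathcal{B}})$ with $c \in \{1, 2\}$. Under (1), given $u \in B_n(X^{\mathcal{B}})$, define $u' \in B_{n+1}$ by $u'_w = u_w$ for $|w| \leq n-1$ and $u'_w = i$ for $|w| = n$; every newly created $2$-block has the form $(u_w, i, i)$ with $u_w \in \{1,2\}$, which belongs to $\mathcal{B}$ by (1), so $|B_{n+1}^{D_i}| \geq |B_n|$ and hence $h^{D_i}(\mathcal{B}) \geq h(\mathcal{B})$. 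Under (2), insert two additional layers below $u$: first a layer of $\bar{i}$'s (valid by $(1,\bar{i},\bar{i}),(2,\bar{i},\bar{i}) \in \mathcal{B}$) and then a layer of $i$'s (valid by $(\bar{i}, i, i) \in \mathcal{B}$), giving $|B_{n+2}^{D_i}| \geq |B_n|$ and again $h^{D_i}(\mathcal{B}) \geq h(\mathcal{B})$.

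For the ``Only If'' direction, argue by contrapositive that if neither (1) nor (2) holds, then $a_n^{D_i} + b_n^{D_i} \leq 1$ for all sufficiently large $n$. The case split is on which of $(1,i,i), (2,i,i)$ lie in $\mathcal{B}$. If both are missing, $a_2^{D_i} = b_2^{D_i} = 0$ and all later counts vanish. If only $(i,i,i) \in \mathcal{B}$, then $(\bar{i},i,i) \notin \mathcal{B}$ forces (2) to fail automatically, and a quick induction yields $(a_n^{D_i}, b_n^{D_i}) = (\delta_{1,i}, \delta_{2,i})$ for every $n$. If only $(\bar{i},i,i) \in \mathcal{B}$, then (1) fails and the failure of (2) means at least one of $(1,\bar{i},\bar{i}), (2,\bar{i},\bar{i})$ is absent; a short induction in each remaining sub-case shows $(a_n^{D_i}, b_n^{D_i})$ either vanishes from some $n$ on or settles into a periodic orbit taking values only in $\{(0,0),(1,0),(0,1)\}$. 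In every sub-case $|B_n^{D_i}| \leq 1$ eventually, so $h^{D_i}(\mathcal{B}) = 0 < \ln 2 = h(\mathcal{B})$.

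The main obstacle is the ``Only If'' direction, which is not a single slick inequality but requires tracking how missing $2$-blocks propagate the Dirichlet constraint upward through the tree. What makes the bookkeeping tractable is the observation that failure of both (1) and (2) leaves no $2$-block with children pattern $(i,i)$ or $(\bar{i},\bar{i})$ admitting two possible root labels; consequently, the recursion defining $a_n^{D_i}, b_n^{D_i}$ collapses to deterministic $\{0,1\}$-valued transitions, and the finitely many sub-cases each reduce to verifying a trivial one-step recursion.
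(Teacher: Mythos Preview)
Your proposal is correct and follows essentially the same strategy as the paper: for the ``If'' direction you append one or two constant layers to produce an injection $B_n(X^{\mathcal{B}}) \hookrightarrow B_{n+c}^{D_i}(X^{\mathcal{B}})$, and for the ``Only If'' direction you case-split on which of $(1,i,i),(2,i,i)$ lie in $\mathcal{B}$ and show the Dirichlet counts stay bounded by $1$.

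The one noteworthy difference is in the ``Only If'' direction. The paper argues more tersely, treating the two one-block-present sub-cases by a ``without loss of generality'' reduction and a one-line observation about propagation to the next layer. Your version is more explicit: you introduce the sequences $a_n^{D_i},b_n^{D_i}\in\{0,1\}$, identify the recursion, and track the finitely many orbits directly (fixed at $(\delta_{1,i},\delta_{2,i})$, eventually zero, or period-$2$ in $\{(1,0),(0,1)\}$). This buys you a cleaner and more self-contained argument; in particular, the sub-case $(\bar{i},i,i)\in\mathcal{B}$, $(i,i,i)\notin\mathcal{B}$ with exactly one of $(1,\bar{i},\bar{i}),(2,\bar{i},\bar{i})$ missing is handled transparently in your framework, whereas the paper's ``analogous argument'' line leaves that verification to the reader. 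Conversely, the paper's formulation has the virtue of brevity and makes the structural reason (the boundary constraint forces the penultimate layer to be constant) more visible. Both routes are short; yours is the more robust write-up.
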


\begin{proof}
We address the elucidation for the case where $i = 1$. The other case can be
done analogously.

Start with the ``If'' part. Note that 
\begin{equation*}
B_n^{D_1}(X^{\mathcal{B}}) = \{u \in B_n(X^{\mathcal{B}}): u_w = 1 \text{
for } |w| = n-1\}.
\end{equation*}
Suppose $\{(1, 1, 1), (2, 1, 1)\} \subseteq \mathcal{B}$. It follows
immediately that 
\begin{equation*}
|B_n^{D_1}(X^{\mathcal{B}})| = |B_{n-1}(X^{\mathcal{B}})|.
\end{equation*}
Hence, 
\begin{equation*}
h^{D_1}(\mathcal{B}) = \lim_{n \to \infty} \frac{\ln^2 |B_n^{D_1}(X^{%
\mathcal{B}})|}{n} = \lim_{n \to \infty} \frac{\ln^2 |B_{n-1}(X^{\mathcal{B}%
})|}{n} = h(\mathcal{B}).
\end{equation*}
On the other hand, $\{(2, 1, 1), (1, 2, 2), (2, 2, 2)\} \subseteq \mathcal{B}
$ infers that 
\begin{equation*}
|B_n^{D_1}(X^{\mathcal{B}})| = |B_{n-1}^{D_2}(X^{\mathcal{B}})| =
|B_{n-2}(X^{\mathcal{B}})|,
\end{equation*}
which concludes the coincidence of $h^{D_1}(\mathcal{B})$ and $h(\mathcal{B}%
) $.

Conversely, $\{(1, 1, 1), (2, 1, 1)\} \subseteq \mathcal{F}$ derives that $%
B_n^{D_1}(X^{\mathcal{B}}) = \varnothing$ for all $n \geq 2$. Hence $0 =
h^{D_1}(\mathcal{B}) < h(\mathcal{B})$. Without loss of generality, we
assume that $(1, 1, 1) \in \mathcal{B}$ while $(2, 1, 1) \in \mathcal{F}$.
Observe that 
\begin{equation*}
|B_n^{D_1}(X^{\mathcal{B}})| = |B_{n-1}^{D_2}(X^{\mathcal{B}})| = |B_2(X^{%
\mathcal{B}}_1)|
\end{equation*}
since $(2, 1, 1) \in \mathcal{F}$. This shows that $0 = h^{D_1}(\mathcal{B})
< h(\mathcal{B})$. Therefore, we have $(2, 1, 1) \in \mathcal{B}$. Analogous
argument demonstrates that $0 = h^{D_1}(\mathcal{B}) < h(\mathcal{B})$ if $%
\{(1, 2, 2), (2, 2, 2)\} \nsubseteq \mathcal{B}$.

The proof is then complete.
\end{proof}

Theorem \ref{thm:1to1-correspondence-entropy-SNRE} demonstrates that the
topological entropy of $X$ is associated with a system of nonlinear
recurrence equations 
\begin{equation*}
\left\{ 
\begin{array}{l}
a_{n}^{(1)}=F^{(1)}(a_{n-1}^{(1)},a_{n-1}^{(2)}), \\ 
a_{n}^{(2)}=F^{(2)}(a_{n-1}^{(1)},a_{n-1}^{(2)}),, \\ 
a_{2}=|B_{2}(X_{1}^{\mathcal{B}})|,b_{2}=|B_{2}(X_{2}^{\mathcal{B}})|.%
\end{array}%
\right.
\end{equation*}%
for some $F^{(1)}$ and $F^{(2)}$, where $a_{n}^{(i)}=|B_{n}(X_{i}^{\mathcal{B}})|$ for $n\geq 3$ and $i=1,2$. Let $%
v_{i}$ be the indicator vector of $F^{(i)}$ for $i=1,2$. Theorem \ref%
{thm:Entropy-PBC} indicates a sufficient condition for the coincidence of
topological entropy of $X$ with and without periodic boundary condition.

\begin{theorem}
\label{thm:Entropy-PBC} Suppose $X^{\mathcal{B}}$ is a Markov tree-shift and 
$h(\mathcal{B}) > 0$.

\begin{enumerate}
\item $h^P(\mathcal{B}) = h(\mathcal{B})$ if $v_i$ dominates $v_{\bar{i}}$
and $\{(1, i, i), (2, i, i)\} \subseteq \mathcal{B}$ for some $i = 1, 2$,
where $i + \bar{i} = 3$.

\item If $h^P(\mathcal{B}) = h(\mathcal{B})$, then $\{(1, i, i), (2, i, i)\}
\subseteq \mathcal{B}$ for some $i = 1, 2$.
\end{enumerate}
\end{theorem}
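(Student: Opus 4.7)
My plan is to handle the two parts separately, using throughout the decomposition $B_n^P(X^{\mathcal{B}}) = B_n^{P,1} \sqcup B_n^{P,2}$, where $B_n^{P,i} = \{u \in B_n^P : u_\epsilon = i\}$ collects the periodic $n$-blocks whose common root-and-leaf symbol is $i$.

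For part (1), take $i = 1$ without loss of generality, so $v_1 \geq v_2$ and $\{(1,1,1), (2,1,1)\} \subseteq \mathcal{B}$. I first set up a bijection between $B_{n-1}(X_1^{\mathcal{B}})$ and $B_n^{P,1}$: any $(n-1)$-block rooted at $1$ extends uniquely to a member of $B_n^{P,1}$ by appending two $1$-labeled children to every leaf, and this extension is admissible because both $(1,1,1)$ and $(2,1,1)$ lie in $\mathcal{B}$. Thus $|B_n^{P,1}| = a_{n-1}^{(1)}$. The hypothesis $v_1 \geq v_2$ then gives $a_n^{(2)} \leq a_n^{(1)}$ pointwise, so $a_n^{(1)} \leq |B_n(X^{\mathcal{B}})| \leq 2 a_n^{(1)}$ and hence $h(\mathcal{B}) = \lim_n \ln^2 a_n^{(1)} / n$. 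Combining these, $h^P(\mathcal{B}) \geq \lim_n \ln^2 a_{n-1}^{(1)}/n = h(\mathcal{B})$; together with the already noted $h^P \leq h$, equality follows.

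For part (2), I argue by contrapositive: assume $\{(1,i,i), (2,i,i)\} \not\subseteq \mathcal{B}$ for both $i = 1, 2$, and deduce $h^P < h$. The key claim is $|B_n^{P,i}| \leq 1$ for every $i$ and every $n \geq 2$. Inductively, if every node at level $n-1-k$ of some $u \in B_n^{P,i}$ carries a common label $\ell_k$ (with $\ell_0 = i$), then each node at level $n-2-k$ is the parent of two $\ell_k$-labeled children and must carry a label $j$ with $(j, \ell_k, \ell_k) \in \mathcal{B}$; the negated hypothesis applied at the symbol $\ell_k$ guarantees at most one such $j$, so $\ell_{k+1}$ is forced (or else $B_n^{P,i} = \varnothing$). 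Hence the block is completely determined by $i$, giving $|B_n^P| \leq 2$ and consequently $h^P(\mathcal{B}) = 0 < h(\mathcal{B})$.

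The main obstacle is the induction in part (2): although the conclusion of the theorem is existential in $i$, the negated hypothesis must be read universally in $i$, and it is exactly this universality that allows the uniqueness of the forced label to propagate through every level of the tree regardless of what value $\ell_k$ takes. Without recognizing this one might worry that the induction breaks as soon as some intermediate $\ell_k$ differs from the original leaf label $i$.
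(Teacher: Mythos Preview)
Your proof is correct and follows essentially the same approach as the paper: for part (1) you use dominance to reduce $h(\mathcal{B})$ to the growth of $a_n^{(i)}$ and then identify $|B_n^{P,i}|$ with $a_{n-1}^{(i)}$ via the leaf-extension bijection, exactly as the paper does; for part (2) you argue by contrapositive that the labels are forced level by level from the leaves, giving $|B_n^P|\le 2$. Your inductive presentation of part (2) is in fact cleaner than the paper's, which splits into cases (e.g.\ $\{(1,1,1),(2,2,2)\}\subseteq\mathcal{B}$, $\{(2,1,1),(1,2,2)\}\subseteq\mathcal{F}$) and dismisses the remaining configurations as ``routine,'' whereas your uniform induction handles all configurations at once by correctly reading the negated hypothesis universally in the intermediate label $\ell_k$.
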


\begin{proof}
(a) Since $v_i$ dominates $v_{\bar{i}}$, we have 
\begin{equation*}
h(\mathcal{B}) = \lim_{n \to \infty} \frac{\ln^2 |B_n(X^{\mathcal{B}})|}{n}
= \lim_{n \to \infty} \frac{\ln^2 |B_n(X^{\mathcal{B}}_i)|}{n}.
\end{equation*}
Moreover, $\{(1, i, i), (2, i, i)\} \subseteq \mathcal{B}$ infers that 
\begin{equation*}
|B_n^P(X^{\mathcal{B}})| \geq |B_n^P(X^{\mathcal{B}}_i)| = |B_{n-1}(X^{%
\mathcal{B}}_i)|.
\end{equation*}
The desired result then follows.

(b) Suppose, for each $i \in \{1, 2\}$, there exists $i^{\prime }\in \{1,
2\} $ such that $(i^{\prime }, i, i) \in \mathcal{F}$. A routine
verification demonstrates that $\{(i, 1, 1), (i, 2, 2)\} \subseteq \mathcal{B%
}$ and $\{(\bar{i}, 1, 1), (\bar{i}, 2, 2)\} \subseteq \mathcal{F}$ yields $%
h^P(\mathcal{B}) = 0 < h(\mathcal{B})$ since $|B_n^P(X^{\mathcal{B}})| \leq
2 $ for all $n$, which derives contradiction. Without loss of generality, we
assume that 
\begin{equation*}
\{(1, 1, 1), (2, 2, 2)\} \subseteq \mathcal{B} \quad \text{and} \quad \{(2,
1, 1), (1, 2, 2)\} \subseteq \mathcal{F}.
\end{equation*}
We can conclude that $|B_n^P(X^{\mathcal{B}})| = 2$ for all $n$, which leads
to $h^P(\mathcal{B}) = 0 < h(\mathcal{B})$. Therefore, $h^P(\mathcal{B}) = h(%
\mathcal{B})$ implies $\{(1, i, i), (2, i, i)\} \subseteq \mathcal{B}$ for
some $i = 1, 2$.
\end{proof}


\section{Conclusion and Remarks}


\subsection{Results}

This paper studies the complexity of tree-shifts of finite type via a
well-known indicator like entropy. The main difference between the definition of
entropy for classical symbolic dynamics and tree-shifts is the power that
apply to the natural logarithmic function. The entropy of binary Markov
tree-shifts over two symbols, i.e., $d=k=2$, is characterized completely. It
turns out the entropy of binary Markov tree-shifts over two symbols is
either $0$ or $\ln 2$. In other words, the dynamical behavior of a binary
Markov tree-shift with two symbols is either the simplest or the most
complicated.

For the general case, i.e., $\max \{d,k\}\geq 3$, we show that, the entropy
set of tree-shifts of finite type covers a large number of reals which
include the multinacci numbers (Realization Theorem). More importantly, such
tree-shift can be constructed explicitly. Finally, the entropies of some
tree-shifts of finite type can be computed explicitly (dominant-type,
uniformly mixing and symmetric-type).

Whenever we restrict the case where $d = k = 2$, the necessary and
sufficient condition for the coincidence of the entropy with and without
boundary condition is elaborated. Herein, we consider three different types
of boundary conditions, say, the periodic, Dirichlet, and Neumann boundary
conditions. It is remarkable that the invariance of entropy with boundary
constraints is determined by the basic set of allowed patterns.


\subsection{Open problems}

There are several interesting open problems related to this paper. Some of
them are in preparation and will be discussed in the future work.

\begin{problem}
How to compute the hidden entropy explicitly for $d=k=2$?
\end{problem}

\begin{problem}
How to compute the entropy for $d,k\geq 2$?
\end{problem}

To the best of our knowledge, there is no algorithm for solving the entropy
and hidden entropy of an SNRE. Many researchers have devoted to the study of nonlinear recurrence equations. Readers are referred to \cite{GreathouseIV-2012} for more details.

\begin{problem}
Can the entropy of $1$-$d$ (or $2$-$d$) SFTs be realized by TSFT? That is,
if $\ln \rho $ is the topological entropy of some matrix shift $X=X_{A}$ (or 
$X=X_{A,B}$), where $A$ and $B$ are the corresponding transition matrices.
Does there exist a TSFT with $h(\mathcal{B})=\ln \rho $? One can check that
the number $\rho $ in Theorem \ref{Thm : 2} is a class of topological
entropy of $1$-$d$ SFTs. But the construction of the TSFT in the proof of
Theorem \ref{Thm : 2} can not extend to the general cases.
\end{problem}

In \cite{BC-2015}, we show that every tree-shift of finite type is
topological conjugate to a vertex tree-shift, which is defined analogously
to the definition of matrix shifts. We conjecture that the answer to the
above problem is affirmative, and the related work is in preparation.

Last but not least, Section 4 elaborates, for the case where $d = k = 2$,
the necessary and sufficient conditions for the coincidence of entropy of
Markov tree-shifts with and without boundary condition. Herein, three
different types of frequently considered boundary conditions are discussed,
that is to say, periodic, Dirichlet, and Neumann boundary conditions. Except
for the invariance problem, it is natural to ask the following: How to compute
the entropy of tree-shifts of finite type with boundary condition?

\begin{problem}
How to compute the exact value of the entropy of tree-shifts of finite type
with boundary condition?
\end{problem}

\section*{Acknowledgment}
We would like to express our deep gratitude for the anonymous referees' valuable and constructive comments, which have significantly improved the quality and readability of this paper.

\bibliographystyle{amsplain}
\bibliography{../../grece}

\providecommand{\bysame}{\leavevmode\hbox to3em{\hrulefill}\thinspace}
\providecommand{\MR}{\relax\ifhmode\unskip\space\fi MR }
\providecommand{\MRhref}[2]{%
  \href{http://www.ams.org/mathscinet-getitem?mr=#1}{#2}
}
\providecommand{\href}[2]{#2}
\begin{thebibliography}{10}

\bibitem{AH-2003}
V.~S. Afraimovich and S.~B. Hsu, \emph{Lectures on chaotic dynamical systems},
  AMS/IP Studies in Advanced Mathematics, vol.~28, American Mathematical
  Society, Providence, RI, 2003.

\bibitem{AS-FQ1973}
A.~V. Aho and N.~J.~A. Sloane, \emph{Some doubly exponential sequences}, Fib.
  Quart. \textbf{11} (1973), 429--437.

\bibitem{ALM-2000}
L.~Alsed\`{a}, J.~Llibre, and M.~Misiurewicz, \emph{Combinatorial dynamics and
  entropy in dimension one}, World Scientific Publishing, Singapore, 2000.

\bibitem{AB-TCS2012}
N.~Aubrun and M.-P. B\'{e}al, \emph{Tree-shifts of finite type}, Theor. Comput.
  Sci. \textbf{459} (2012), 16--25.

\bibitem{AB-TCS2013}
\bysame, \emph{Sofic tree-shifts}, Theory Comput. Systems \textbf{53} (2013),
  621--644.

\bibitem{BC-2015}
J.-C. Ban and C.-H. Chang, \emph{Tree-shifts: Irreducibility, mixing, and chaos
  of tree-shifts}, Trans. Am. Math. Soc. (2016), accepted.

\bibitem{BL-DCDS2005}
J.-C. Ban and S.-S. Lin, \emph{Patterns generation and transition matrices in
  multi-dimensional lattice models}, Discrete Contin. Dyn. Syst. \textbf{13}
  (2005), 637--658.

\bibitem{BB-N2006}
M.~L. Blank and L.~A. Bunimovich, \emph{Long range action in networks of
  chaotic elements}, Nonlinearity \textbf{19} (2006), 329.

\bibitem{BPS-TAMS2010}
M.~Boyle, R.~Pavlov, and M.~Schraudner, \emph{Multidimensional sofic shifts
  without separation and their factors}, Trans. Am. Math. Soc. \textbf{362}
  (2010), 4617--4653.

\bibitem{CMV-RCD1996}
S.-N. Chow, J.~Mallet-Paret, and E.~S.~Van Vleck, \emph{Pattern formation and
  spatial chaos in spatially discrete evolution equations}, Random Comput.
  Dynam. \textbf{4} (1996), 109--178.

\bibitem{CR-2002}
L.~O. Chua and T.~Roska, \emph{Cellular neural networks and visual computing},
  Cambridge University Press, 2002.

\bibitem{CJJ+-ETDS2003}
E.~Coven, A.~Johnson, N.~Jonoska, and K.~Madden, \emph{The symbolic dynamics of
  multidimensional tiling systems}, Ergodic Theory Dynam. Systems \textbf{23}
  (2003), 447--460.

\bibitem{Downarowicz-2011}
T.~Downarowicz, \emph{Entropy in dynamical systems}, Cambridge University
  Press, 2011.

\bibitem{GreathouseIV-2012}
C.~R. {Greathouse IV}, \emph{Growth of sequences},
  \url{http://oeis.org/wiki/Growth_of_sequences} (From On-Line Encyclopedia of
  Integer Sequences\circledR), 2012.

\bibitem{HR-1985}
J.~K. Hale and C.~Rocha, \emph{Varying boundary conditions with large
  diffusivity}, Tech. report, Brown Univ Providence Ri Lefschetz Center for
  Dynamical Systems, 1985.

\bibitem{HM-AoM2010}
M.~Hochman and T.~Meyerovitch, \emph{A characterization of the entropies of
  multidimensional shifts of finite type}, Ann. of Math. \textbf{171} (2010),
  2011--2038.

\bibitem{IS-AMUC2004}
E.~Ionascu and P.~Stanica, \emph{Effective asymptotics for some nonlinear
  recurrences and almost doubly-exponential sequences}, Acta Mathematica
  Universitatis Comenianae \textbf{LXXIII} (2004), 75--87.

\bibitem{JM-PAMS1999}
A.~Johnson and K.~Madden, \emph{The decomposition theorem for two-dimensional
  shifts of finite type}, Proc. Amer. Math. Soc. \textbf{127} (1999),
  1533--1543.

\bibitem{Kee-SJAM1987}
J.~P. Keener, \emph{Propagation and its failure in coupled systems of discrete
  excitable cells}, SIAM J. Appl. Math. \textbf{47} (1987), 556--572.

\bibitem{Kit-1998}
B.~Kitchens, \emph{Symbolic dynamics. one-sided, two-sided and countable state
  {Markov} shifts}, Springer-Verlag, New York, 1998.

\bibitem{LM-1995}
D.~Lind and B.~Marcus, \emph{An introduction to symbolic dynamics and coding},
  Cambridge University Press, Cambridge, 1995.

\bibitem{LS-2002}
D.~Lind and K.~Schmidt, \emph{Symbolic and algebraic dynamical systems},
  Handbook of Dynamical Systems, vol.~1A, North-Holland, Amsterdam, 2002,
  pp.~765--812.

\bibitem{MC-ITCSIFTaA1995}
J.~Mallet-Paret and S.-N. Chow, \emph{Pattern and spatial chaos in lattice
  dynamical systems - part ii}, IEEE Trans. Circuits Syst. I. Funda. Theory and
  Appli. \textbf{42} (1995), 752--756.

\bibitem{PS-JdM2015}
R.~Pavlov and M.~Schraudner, \emph{Entropies realizable by block gluing
  $\mathbb{Z}^d$ shifts of finite type}, Journal d'Analyse Math\'{e}matique
  \textbf{126} (2015), 113--174 (English).

\bibitem{P-1997}
Y.~Pesin, \emph{Dimension theory in dynamical systems: Contemporary views and
  application}, The University of Chicago Press, 1997.

\bibitem{Shi-SJAM2000}
C.-W. Shih, \emph{Influence of boundary conditions on pattern formation and
  spatial chaos in lattice systems}, SIAM J. Appl. Math. \textbf{61} (2000),
  335--368.

\bibitem{Thir-ITCSIFTA1993}
P.~Thiran, \emph{Influence of boundary conditions on the behavior of cellular
  neural networks}, IEEE Trans. Circuits Syst. I. Funda. Theory Appl.
  \textbf{40} (1993), 207--212.

\bibitem{VCC-PAMIIEEET2005}
J.~L. Verdu-Mas, R.~C. Carrasco, and J.~Calera-Rubio, \emph{Parsing with
  probabilistic strictly locally testable tree languages}, Pattern Analysis and
  Machine Intelligence, IEEE Transactions on \textbf{27} (2005), 1040--1050.

\end{thebibliography}


\end{document}